\newtheorem{theorem}{Theorem}[section]
\newtheorem{lemma}[theorem]{Lemma}
\newtheorem{proposition}[theorem]{Proposition}
\theoremstyle{definition}
\def\Z{\mathbb Z}
\def\N{\mathbb N}
\def\S{\mathcal{S}}
\newcommand{\R}{\mathbb{R}}
\theoremstyle{remark}
\newtheorem{remark}[theorem]{Remark}
\numberwithin{equation}{section}
\theoremstyle{remark}
\begin{document}

\title[Asymptotic results for  FRST and FRWT]{Asymptotic results for the distributional fractional Stockwell and fractional wavelet transforms}

\author[S. Maksimovi\'{c}]{Snje\v{z}ana Maksimovi\'{c}}
\address{Faculty of Architecture, Civil Engineering and Geodesy, University of Banja Luka\\ Bulevar vojvode Petra Bojovi\'{c}a 1a \\ 78000 Banja Luka\\ Bosnia and Herzegovina}
\email{snjezana.maksimovic@aggf.unibl.org}
%
%


\subjclass[2020]{42C40,46F12,40E05}
\maketitle
\begin{abstract}
We provide some Abelian and Tauberian results characterizing the quasiasymptotic behavior of Lizorkin distributions  in terms of their Stockwell transform. We prove the continuity of the  fractional wavelet transform and the corresponding
synthesis operator on the Schwartz  spaces and their duals, respectively. Additionally, we establish a connection between fractional Stockwell and fractional wavelet transforms and provide some asymptotic results for the distributional fractional wavelet transform.
\end{abstract}
\vspace{0.2cm}

Keywords: {Fractional Stockwell transform, fractional wavelet transform, distributions, Abelian and Tauberian theorems}



\section{Introduction}
\label{Sec:1}
It is well known that distributions do not have point values in the general case, which imposes the idea of absorbing the asymptotic analysis into the field of generalized functions.
The natural generalization of classical asymptotics in the framework of Schwartz distributions is the quasiasymptotic behavior (quasiasymptotics) introduced in \cite{VDZ}.
It is an old mathematical tool that has found applications  in  various  fields  of  pure  and  applied  mathematics,  physics,  and  engineering. Many results involve the analysis of the behavior of distributions through various integral transforms and Abelian and Tauberian theory (cf. \cite{6-2,  KPSV, Vindas4, 7, 8,  VPR}).

In the last two decades, fractional transforms have played an important role in several areas, including signal and image processing, optics, geo-informatics, radar signal and communications, bio-medical, etc. (cf. \cite{Alm, AMP,Capus, Namias,Zayed, Pathak,  shi}). 
The fractional Fourier transform (FRFT), first introduced in \cite{Namias}, is a generalization of the Fourier transform (FT) in which the FT kernel is substituted with the fractional kernel $K_\alpha$, i.e.,
\begin{equation*}\label{frstft}\mathcal{F}_\alpha f(\xi)=
\int_{\R}f(x)K_{\alpha}(x,\xi)dx, \quad \xi\in \R,
  \end{equation*}
where
\begin{equation}\label{kernel} K_{\alpha}(x,\xi)=\begin{cases}
C_\alpha e^{i(\frac{x^2+\xi^2}{2}c_1-x\xi c_2)}, &\text{ $\alpha\neq n\pi$}\\
 \ \ \ \delta(x-\xi), &\text{ $\alpha=2n\pi$} \\
     \ \ \ \delta(x+\xi), &\text{ $\alpha=(2n+1)\pi$}\\

  \end{cases}  
\end{equation}
 $c_1=\cot\alpha, \ c_2=\csc \alpha$, $C_\alpha=\sqrt{\frac{1-ic_1}{2\pi}}$, $n\in\N$.  The kernel $K_{\alpha}(x,\xi)$ is an infinitely differentiable function in both $x$ and $\xi$.
 The fractional short-time Fourier transform (FRSTFT), as a generalization of the short-time Fourier transform (STFT), is proposed to solve the problem of the FRFT when failing to locate the fractional Fourier domain frequency contents (cf. \cite{Capus}). The fractional wavelet transform (FRWT), first introduced in \cite{mendlo} (see also \cite{shi}), is a generalization of the wavelet transform (WT). This transform is proposed to rectify the limitations of the WT and the FRFT. The generalization of the Stockwell transform (ST) is the fractional Stockwell transform (FRST). The FRST is first introduced in \cite{s1} to deal with seismic data and improve the deficiencies of the STFT and WT (cf. \cite{Alm,s1}). Since then, the FRST has played an important role in signal and image processing, and has become the most popular multiresolution tool (cf. \cite{s5, maks1, s4}).

In this paper, we focus on the FRST and FRWT introduced in \cite{shi}. The paper is divided into two parts. The first part (Section \ref{se1}) is a continuation of the work by \cite{maks}, where several Abelian and Tauberian-type results connecting the quasiasymptotic behavior of Lizorkin distributions to the asymptotic behavior of their FRST are provided. In the second part (Section \ref{se2}), we establish the relation between the distributional FRST and FRWT. Additionally, continuity theorems for the FRWT (Theorems \ref{wte1} and \ref{wte2}) are given. By using these continuity theorems, we demonstrate that both transforms, defined as transposed mappings and as actions on appropriate window functions, coincide on the Schwartz space of tempered distributions. Moreover, we present some results regarding the asymptotic behavior of distributional FRWT, utilizing the obtained asymptotic results for the FRST.

\section{Preliminaries}
\subsection{Notation and spaces}
We employ the notation   $A\lesssim_{\alpha} B$,  which means that $A\leq D(\alpha)\cdot B$ for some  positive constant $D$ that depends on a parameter $\alpha$. For the measurable function $f$ on $\R$, by $M_a f(\cdot)=e^{ia\cdot}f(\cdot)$ we denote the modulation operator and for $\varepsilon>0$ we set $f_\varepsilon(\cdot)=f(\varepsilon\cdot)$. The FT of the function $f$ is defined as: $\hat f(\xi)=\mathcal F(f)(\xi)=\frac{1}{\sqrt{2\pi}}\int_{\R}f(x)e^{- ix\xi}dx$, and inverse FT is $\mathcal F^{-1}(f)(\xi)=\frac{1}{\sqrt{2\pi}}\int_{\R}f(x)e^{ix\xi}dx$.  For the $L^2$ inner product of $f$ and $g$ we use $(f,g)$, and for the dual pairing we use $\langle f,g\rangle$.

The linear space of all rapidly decreasing smooth functions at infinity $\mathcal{S}(\R)$ consists of all functions $f\in\mathcal{C}^\infty(\R)$ for which
$
\rho_{k,p}(f)=\sup_{x\in\R}|x^kf^{(p)}(x)|<\infty, \ k,p\in\N_0.$
The dual space of $\mathcal{S}(\R)$ is $\mathcal{S}'(\R)$ and it is called the Schwartz space of tempered distributions (cf. \cite{Schwartz}). 

The space of highly localized test functions $\mathcal{S}_0(\R)$  is defined as the closed subspace of the space
$\mathcal{S}_0(\R)=\{\varphi\in\mathcal{S}(\R)\mid \int _{\R}x^k\varphi(x)dx=0, \ k\in\N_0\}$.
 Its dual space, denoted by $\mathcal{S}'_0(\mathbb{R})$, is called the Lizorkin distribution space (cf. \cite{wt}).

Let $\mathbb Y=\R\times\R\setminus\{0\}$. We recall the definition of  the space $\mathcal{S}(\mathbb Y)$, which consists of all functions $f\in\mathcal{C}^\infty(\mathbb{Y})$ for which
$
\rho^{l,m}_{s,r}(f)=\sup_{(x,\xi)\in\mathbb{Y}}(|\xi|^s+\frac{1}{|\xi|^s})|x^r\partial_\xi^l\partial_x^m f(x,\xi)|<\infty
$
for all $m,l,s,r\in\N_0$ (cf. \cite{wt}). 

Recall  \cite{Schwartz, wtp} that the space $\mathcal{S}(\mathbb R\times\R^+)$ consists of all functions $f\in\mathcal{C}^\infty(\mathbb R\times\R^+)$ for which
$
\sigma^{l,m}_{s,r}(f)=\sup_{(x,\xi)\in\mathbb R\times\R^+}\xi^s|x^r\partial_\xi^l\partial_x^m f(x,\xi)|<\infty $
for all $m,l,s,r\in\N_0$.

\subsection{Quasiasymptotics}
Recall \cite{7}  that a measurable real valued function, defined and positive on an
interval $(0,A]$ (resp. $[A,\infty )),$ $A>0$, is called {a
slowly varying function} at the origin (resp. at infinity) if
\begin{equation} \label{limL1}
\lim_{\varepsilon \to 0^{+} } \frac{L(a\varepsilon
)}{L(\varepsilon )} =1\quad (\ {\rm resp.} \lim_{\lambda \to
\infty } \frac{L(a\lambda )}{L(\lambda )} =1)\quad {\rm for \ \
each} \ a>0.
\end{equation}

Let $L$ be a slowly varying function at the origin. Recall \cite{KSB} that
the distribution $f\in {\mathcal S_0'}({\Bbb R})$ has
{quasiasymptotic behavior} ({quasiasymptotics}) of
degree $m \in {\Bbb R}$ at the point $x_{0} \in {\Bbb R}$
with respect to $L$, if there exists $u\in {\mathcal S_0'}({\Bbb R})$
such that for each $\varphi \in {\mathcal S_0}({\Bbb R})$
\begin{equation} \label{qbeh}
\lim_{\varepsilon \to 0^{+} } \langle \frac{f(x_{0} +\varepsilon
x)}{\varepsilon ^{m } L(\varepsilon )} ,\ \varphi (x)\rangle
=\langle u(x),\varphi (x)\rangle .
\end{equation}
We will use the following convenient notation for the
quasiasymptotic behavior:
\[f(x_{0} +\varepsilon x) \sim \varepsilon ^{m } L(\varepsilon )u(x)\quad {\rm as} \quad \varepsilon \to 0^{+} \quad {\rm in} \quad {\mathcal S_0'}({\Bbb R}),\]
which should always be interpreted in the weak topology of
$\mathcal S'_0({\Bbb R})$, i.e., in the sense of \eqref{qbeh}.

It is proven in  \cite{PST,
VDZ} that the limit distribution $u$ has to be homogeneous with a degree of homogeneity $m $, i.e.,
$u(ax)=a^{m } u(x)$, for all $a>0 $. We remark that all homogeneous distributions on the real
line are explicitly known; indeed, they are linear combinations of
either $x_{+}^{m} $ and $x_{-}^{m } $, if $m
\notin {\Bbb Z}_{-} $, or $\delta ^{(k-1)} (x)$ and $x^{-k} $, if
$m =-k\in {\Bbb Z}_{-} $. It is shown in \cite{Vindas2} that  the
quasiasymptotic behavior at finite points is a local property. The
quasiasymptotics of distributions at infinity with respect to the
slowly varying function $L$ at infinity is defined in a similar
way, and in this case we use the notation $f(\lambda x) \sim \lambda ^{m }
L(\lambda )u(x)$ as $\lambda \to \infty $ in ${\mathcal S_0'}({\Bbb
R})$ (cf. \cite{Vindas1}). It is shown in \cite{AMP} that the following assertion holds:
\begin{lemma}\label{l1}
 If
\begin{equation}\label{123} \langle f(\varepsilon x)/(\varepsilon^m L(\varepsilon)), \psi(x)\rangle,  \mbox{ converges   as} \ \ \varepsilon\rightarrow 0^+, \,\,\forall \psi\in\mathcal S,
\end{equation}
then
\begin{equation}\label{123a} \langle e^{ic(\varepsilon x)^2/2}f(\varepsilon x)/(\varepsilon^m L(\varepsilon)), \psi(x)\rangle,  \,\, \mbox{ converges   as} \ \ \varepsilon\rightarrow 0^+,
\forall \psi\in\mathcal S,
\end{equation}
where $c$ is a real constant and $\varepsilon\in(0,1)$.
Conversely,
if (\ref{123a}) holds and for some $\varepsilon_0\in(0,\varepsilon),$ the family 
$
\{f(\varepsilon x)/(\varepsilon^m L(\varepsilon)):\varepsilon\in(0,\varepsilon_0)\}\mbox{ is bounded in } \mathcal S'(\R),
$
then $(\ref{123a})\; \Rightarrow \; (\ref{123}).$
\end{lemma}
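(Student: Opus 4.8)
The plan is to move the chirp factor onto the test function and thereby reduce both implications to a single convergence statement in $\mathcal{S}(\R)$. Set $g_\varepsilon(x)=f(\varepsilon x)/(\varepsilon^m L(\varepsilon))$ and $\theta_\varepsilon(x)=c(\varepsilon x)^2/2$. Since $e^{i\theta_\varepsilon}$ is a smooth multiplier on $\mathcal{S}(\R)$, one has $\langle e^{i\theta_\varepsilon}g_\varepsilon,\psi\rangle=\langle g_\varepsilon,\phi_\varepsilon\rangle$, where
\[
\phi_\varepsilon(x):=e^{ic(\varepsilon x)^2/2}\psi(x).
\]
The only new feature compared with \eqref{123} is that the test function $\phi_\varepsilon$ now depends on $\varepsilon$, and the whole lemma will follow once this dependence is controlled.

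The key step I would isolate is the claim that, for each fixed $\psi\in\mathcal{S}(\R)$,
\[
\phi_\varepsilon\longrightarrow\psi\quad\text{in}\ \mathcal{S}(\R)\ \text{as}\ \varepsilon\to0^{+}.
\]
To prove it I would estimate the seminorms $\rho_{k,p}(\phi_\varepsilon-\psi)$ directly. Expanding $\partial_x^p\bigl[(e^{i\theta_\varepsilon}-1)\psi\bigr]$ by the Leibniz rule and noting that for $j\ge1$ the derivative $\partial_x^j e^{i\theta_\varepsilon}$ equals $e^{i\theta_\varepsilon}$ times a polynomial in $x$ whose coefficients are positive powers of $\varepsilon^2$, while $\lvert e^{i\theta_\varepsilon}-1\rvert\le\tfrac{|c|}{2}\varepsilon^2x^2$ and $\lvert e^{i\theta_\varepsilon}\rvert=1$, one finds that every resulting term carries at least one factor $\varepsilon^2$ multiplied by a finite-order Schwartz seminorm of $\psi$. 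This yields $\rho_{k,p}(\phi_\varepsilon-\psi)\lesssim_{c,k,p}\varepsilon^2\max_{k',p'}\rho_{k',p'}(\psi)\to0$, which is exactly the asserted convergence. This computation, with the bookkeeping of the Hermite-type derivatives of the Gaussian chirp, is where the real work lies; the rest is soft.

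Granting the key step, both directions reduce to an equicontinuity estimate. For $\eqref{123}\Rightarrow\eqref{123a}$ I would fix an arbitrary sequence $\varepsilon_n\to0^{+}$; by \eqref{123} the numbers $\langle g_{\varepsilon_n},\psi\rangle$ are bounded in $n$ for every $\psi$, so $\{g_{\varepsilon_n}\}$ is pointwise bounded and hence equicontinuous in $\mathcal{S}'(\R)$ by the Banach--Steinhaus theorem (recall $\mathcal{S}(\R)$ is barrelled); thus there are $C>0$ and $N\in\N$ with $\lvert\langle g_{\varepsilon_n},\chi\rangle\rvert\le C\max_{k,p\le N}\rho_{k,p}(\chi)$ for all $n$. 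Splitting $\langle g_{\varepsilon_n},\phi_{\varepsilon_n}\rangle=\langle g_{\varepsilon_n},\psi\rangle+\langle g_{\varepsilon_n},\phi_{\varepsilon_n}-\psi\rangle$, the first term converges by \eqref{123} and the second is $\le C\max_{k,p\le N}\rho_{k,p}(\phi_{\varepsilon_n}-\psi)\to0$ by the key step; as the limit is the common value of \eqref{123} and independent of the sequence, \eqref{123a} follows. For the converse I would use the \emph{assumed} boundedness of $\{g_\varepsilon\}$ in $\mathcal{S}'(\R)$, which already supplies the same equicontinuity estimate uniformly in $\varepsilon$, and run the identical splitting in reverse form $\langle g_\varepsilon,\psi\rangle=\langle g_\varepsilon,\phi_\varepsilon\rangle-\langle g_\varepsilon,\phi_\varepsilon-\psi\rangle$, so that convergence of the first term (from \eqref{123a}) and vanishing of the second give \eqref{123}. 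The main obstacle throughout is the seminorm estimate of the key step; once it is in place the equicontinuity argument is routine.
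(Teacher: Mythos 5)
Your proof is correct, but note that there is nothing in this paper to compare it against: the paper does not prove Lemma \ref{l1} at all, it simply quotes it from the reference [AMP] (Atanasova--Maksimovi\'{c}--Pilipovi\'{c}), so your argument stands or falls on its own. It stands. The key step is right: writing $\phi_\varepsilon=e^{ic(\varepsilon x)^2/2}\psi$, the Leibniz expansion together with $|e^{iu}-1|\le|u|$ and the fact that $\partial_x^j e^{ic\varepsilon^2x^2/2}$ ($j\ge1$) is $e^{ic\varepsilon^2x^2/2}$ times a polynomial each of whose terms carries a factor $\varepsilon^{2a}$, $a\ge1$, gives $\rho_{k,p}(\phi_\varepsilon-\psi)\lesssim_{c,k,p}\varepsilon^{2}\max_{k',p'}\rho_{k',p'}(\psi)$ for $\varepsilon\le1$, so $\phi_\varepsilon\to\psi$ in $\mathcal S(\R)$. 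The functional-analytic frame is also sound: $\mathcal S(\R)$ is Fr\'echet, hence barrelled, so pointwise boundedness along any sequence $\varepsilon_n\to0^+$ (which follows from the assumed convergence of the pairings) yields equicontinuity by Banach--Steinhaus, and your splitting $\langle g_{\varepsilon_n},\phi_{\varepsilon_n}\rangle=\langle g_{\varepsilon_n},\psi\rangle+\langle g_{\varepsilon_n},\phi_{\varepsilon_n}-\psi\rangle$ gives a sequence-independent limit, hence convergence of the full one-parameter limit. One remark worth making: your argument is symmetric under $c\mapsto-c$ applied to the family $h_\varepsilon=e^{ic(\varepsilon x)^2/2}g_\varepsilon$, so the same Banach--Steinhaus reasoning proves the converse \emph{without} the boundedness hypothesis --- convergence of $\langle h_{\varepsilon_n},\psi\rangle$ already supplies the equicontinuity you need. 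You instead invoke the assumed boundedness there, which is perfectly legitimate (and closer to how a Tauberian hypothesis is normally used), but you have in fact proved a slightly stronger statement than the lemma claims.
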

 \section{Abelian and Tauberian results for the fractional Stockwell transform}\label{se1}
 \subsection{The FRST}
  Let $g\in L^1(\R)\cap L^2(\R)$ such that $\int_{\R} g(x)dx=1$. The FRST of the signal $f\in L^2(\R)$ is defined by (cf. \cite{maks})
\begin{equation*}\label{frst}
S^{\alpha}_gf(x,\xi)=|\xi|\int_{\R}f(t)\overline{g(\xi(t-x))}K_{\alpha}(t,\xi)dt,\quad x\in\R,\xi\in\R\setminus{\{0\}},
\end{equation*}
where  $K_{\alpha}(t,\xi)$ is the fractional kernel defined by \eqref{kernel}.
We  shall always assume that $\alpha\in[0, 2\pi)$.  If $\alpha=\pi/2$, the FRST corresponds to
the ST (cf. \cite{KSB})
$$S_gf(x,\xi)=\frac{|\xi|}{\sqrt{2\pi}}\int_{\R}f(t)\overline{g(\xi(t-x))}e^{-i\xi t}dt,\quad x\in\R,\xi\in\R\setminus{\{0\}},$$ 
and the FRST with $\alpha = 0$ corresponds to the
zero operator.

Let $g\in \mathcal{S}(\R)$ be a non-trivial and let $\psi\in\mathcal{S}(\R)$ be a reconstruction
window for $g$. If $f\in L^1(\R)$ is such that $\hat f\in L^1(\R)$, then the following reconstruction formula
holds pointwise
\begin{equation}\label{reconstructionf}
f(t)=\frac{|\sin\alpha|}{C_{g,\psi,c_2}}\int_{\R}\int_{\R}S_g^\alpha f(x,\xi)\psi(\xi(t-x))K_{-\alpha}(t,\xi)dxd\xi,
\end{equation}
where $C_{g,\psi, c_2}=\int_{\R}\hat \psi(c_2(\omega-1))\overline{\hat g}(c_2(\omega-1))\frac{d\omega}{|\omega|}<\infty$ (cf. \cite{maks}). Using the reconstruction formula \eqref{reconstructionf},   the fractional Stockwell synthesis operator was introduced in \cite{maks} as follows:
\begin{equation*}\label{ssynt}
(S_g^{\alpha})^\ast F(t)=|\sin\alpha|\int_{\R}\int_{\R}F(x,\xi)g(\xi(t-x))K_{-\alpha}(t,\xi)dxd\xi.
\end{equation*}
Thus, the relation \eqref{reconstructionf} takes the form
$((S_\psi^\alpha)^*\circ S_g^\alpha)f=C_{g,\psi,c_2}f.$

It is proven in \cite{maks} that the mappings $S_g^\alpha:\mathcal{S}_0(\R)\rightarrow \mathcal{S}(\mathbb Y)$ and $(S_g^\alpha)^*:\mathcal{S}(\mathbb Y)\rightarrow \mathcal{S}_0(\R)$ are continuous, for $g\in\S_0(\R)$. Additionally, the mappings $S_g^\alpha:\mathcal{S}'_0(\R)\rightarrow \mathcal{S}'(\mathbb Y)$ and $(S_g^\alpha)^*:\mathcal{S}'(\mathbb Y)\rightarrow \mathcal{S}_0(\R')$ are continuous, and the reconstruction formula \eqref{reconstructionf} is generalized to the Lizorkin distributions, as presented in \cite{maks}.

In \cite{maks},  several Abelian and Tauberian-type results characterizing the quasiasymptotic behavior of Lizorkin distributions in terms of their FRST are proven. In order to obtain the quasiasymptotic behavior of the distributional FRWT at the origin, we use the following result from \cite{maks}: If $f\in\S'_0(\R)$ has  the quasiasymptotic behavior at the origin, then for its FRST holds
\begin{equation}\label{rez1}
e^{-ic_1 (\frac{\xi}{\varepsilon})^2/2} S_{g}^\alpha f(\varepsilon x,\frac{\xi}{\varepsilon})\sim \frac{{\sqrt{1-ic_1}}}{c_2^{m}}{\varepsilon^{m}L(\varepsilon)} S_{g}u(x c_2, \frac{\xi}{c_2}) \ \ \text{ as } \ \ \varepsilon\rightarrow0^+ \ \ \text{ in } \ \mathcal{S}_0'(\mathbb Y).
\end{equation}

\subsection{Abelian and Tauberian results for the FRST} 
Our main goal in this section is to present some  new Abelian and Tauberian-type results relating asymptotics of FRST and the quasiasymptotic
behavior of Lizorkin distributions.

\begin{theorem}\label{teab1}
Let $f\in \mathcal{S}_0'(\R)$ has the following quasiasymptotic behavior
\begin{equation}\label{newq}
f(\varepsilon x)\sim {\varepsilon^mL(\varepsilon)} u(x) \ \ \text{ as } \ \ \varepsilon\rightarrow0^+ \ \ \text{ in } \ \mathcal{S}_0'(\R),
\end{equation}
where $m\in\mathbb R$, and $L$ is a slowly varying function at 0 and $u\in\mathcal S_0'(\mathbb R)$. Then, for its FRST with respect to the window $g\in  \mathcal{S}_0(\R)\setminus\{0\}$, we have
\[e^{-ic_1 ({\varepsilon}\xi)^2/2} S_{g_{1/\varepsilon^2}}^\alpha f(\varepsilon x,{\varepsilon}\xi)\sim \frac{\sqrt{1-ic_1}}{c_2^{m}}{\varepsilon^{m+2}L(\varepsilon)} S_{g}(M_{\xi/c_2} u)(x c_2, \frac{\xi}{c_2}) \ \ \text{ as } \ \ \varepsilon\rightarrow0^+ \ \ \text{ in } \ \mathcal{S}_0'(\mathbb Y),\]
whenever $\alpha\in(2k\pi,(2k+1)\pi), k\in\mathbb Z$.
\end{theorem}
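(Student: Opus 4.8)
The plan is to compute the left-hand side directly from the definition of the FRST and to reduce it to an $s$-pairing to which the quasiasymptotic hypothesis \eqref{newq} and Lemma \ref{l1} apply. Fix $\alpha\in(2k\pi,(2k+1)\pi)$, so that $c_2=\csc\alpha>0$ and $K_\alpha(t,\xi)=C_\alpha e^{i(\frac{t^2+\xi^2}{2}c_1-t\xi c_2)}$. Writing out $S_{g_{1/\varepsilon^2}}^\alpha f(\varepsilon x,\varepsilon\xi)$ and substituting $t=\varepsilon s$, I would first note that the window dilation is tuned exactly so that $g_{1/\varepsilon^2}(\varepsilon\xi(\varepsilon s-\varepsilon x))=g(\xi(s-x))$ is $\varepsilon$-independent, while the kernel becomes $C_\alpha e^{i(\frac{\varepsilon^2 s^2+\varepsilon^2\xi^2}{2}c_1-\varepsilon^2 s\xi c_2)}$ and $dt=\varepsilon\,ds$. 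The prefactor $e^{-ic_1(\varepsilon\xi)^2/2}$ is precisely what cancels the $\xi$-chirp $e^{ic_1\varepsilon^2\xi^2/2}$, leaving
\[e^{-ic_1(\varepsilon\xi)^2/2}S_{g_{1/\varepsilon^2}}^\alpha f(\varepsilon x,\varepsilon\xi)=\varepsilon^2|\xi|\,C_\alpha\,\big\langle f(\varepsilon s),\ \overline{g(\xi(s-x))}\,e^{ic_1(\varepsilon s)^2/2}\,e^{-i\varepsilon^2 s\xi c_2}\big\rangle_s.\]

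I would then read this as a distribution on $\mathbb Y$ tested against $\Phi$ in the predual of $\mathcal S_0'(\mathbb Y)$ and pass to the limit $\varepsilon\to0^+$. Two $\varepsilon$-dependent factors sit inside the $s$-pairing: the chirp $e^{ic_1(\varepsilon s)^2/2}$ and the modulation $M_{-\varepsilon^2\xi c_2}=e^{-i\varepsilon^2 s\xi c_2}$. Since $\overline{g(\xi(\cdot-x))}\in\mathcal S_0(\R)$ and the modulation frequency $\varepsilon^2\xi c_2\to0$, the product $\overline{g(\xi(\cdot-x))}\,e^{-i\varepsilon^2\xi c_2\,(\cdot)}$ converges to $\overline{g(\xi(\cdot-x))}$ in $\mathcal S(\R)$, while the chirp is absorbed by Lemma \ref{l1}: it guarantees that the chirped pairing converges, and since $e^{ic_1(\varepsilon s)^2/2}\psi(s)\to\psi(s)$ in $\mathcal S(\R)$ with $\{f(\varepsilon\cdot)/(\varepsilon^mL(\varepsilon))\}$ bounded in $\mathcal S'(\R)$, its limit equals $\langle u,\psi\rangle$. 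Together with \eqref{newq} this gives
\[e^{-ic_1(\varepsilon\xi)^2/2}S_{g_{1/\varepsilon^2}}^\alpha f(\varepsilon x,\varepsilon\xi)\sim \varepsilon^{m+2}L(\varepsilon)\,|\xi|\,C_\alpha\,\big\langle u(s),\ \overline{g(\xi(s-x))}\big\rangle_s \quad(\varepsilon\to0^+).\]

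Finally I would identify this limit with $\frac{\sqrt{1-ic_1}}{c_2^{m}}\varepsilon^{m+2}L(\varepsilon)\,S_g(M_{\xi/c_2}u)(xc_2,\xi/c_2)$. Expanding the ordinary Stockwell transform of $M_{\xi/c_2}u$ at $(xc_2,\xi/c_2)$, the modulation $M_{\xi/c_2}$ cancels the kernel factor $e^{-i(\xi/c_2)t}$, so that $S_g(M_{\xi/c_2}u)(xc_2,\xi/c_2)=\frac{|\xi/c_2|}{\sqrt{2\pi}}\langle u(t),\overline{g(\tfrac{\xi}{c_2}t-\xi x)}\rangle$; the dilation $t=c_2 s$ combined with the homogeneity $u(c_2 s)=c_2^{m}u(s)$ (legitimate because $c_2>0$) produces a factor $c_2^{m+1}$ and restores the window $\overline{g(\xi(s-x))}$. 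Bookkeeping the constants, $C_\alpha=\sqrt{(1-ic_1)/(2\pi)}$ and $\frac{|\xi/c_2|}{\sqrt{2\pi}}c_2^{m+1}=\frac{|\xi|c_2^{m}}{\sqrt{2\pi}}$ combine to the stated prefactor $\frac{\sqrt{1-ic_1}}{c_2^{m}}$. I expect the main obstacle to be the rigorous limit interchange in the second step: one must control the $\varepsilon$-dependent test function $\overline{g(\xi(\cdot-x))}\,e^{ic_1(\varepsilon\,(\cdot))^2/2}\,e^{-i\varepsilon^2\xi c_2\,(\cdot)}$ uniformly in the outer variables $(x,\xi)$ against $\Phi$, and, in applying Lemma \ref{l1}, to account for the fact that the chirp and the modulation push the window out of $\mathcal S_0(\R)$, so that the hypothesis \eqref{123} (convergence against all $\psi\in\mathcal S$, equivalently boundedness of the dilates in $\mathcal S'(\R)$) must be extracted from \eqref{newq} rather than taken for granted.
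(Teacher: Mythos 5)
Your proposal is correct and is essentially the paper's own proof: both reduce the left-hand side, via the substitution $t=\varepsilon s$, to $\varepsilon^2|\xi|\,C_\alpha\,\langle e^{ic_1(\varepsilon s)^2/2}f(\varepsilon s),\;\overline{g(\xi(s-x))}\,e^{-ic_2\varepsilon^2\xi s}\rangle$, and both pass to the limit by playing the hypothesis \eqref{newq} and Lemma \ref{l1} against the vanishing modulation, then matching constants. The limit interchange you single out as the main obstacle is exactly what the paper's proof spends its effort on: it splits $\varepsilon$ into two parameters $(\varepsilon_1,\varepsilon_2)$ (dilation/chirp versus modulation) and argues that the $\varepsilon_1$-limit holds uniformly in $\varepsilon_2$ because weak and strong convergence are equivalent on the dual of $\mathcal{S}_0(\R)$, which is precisely the uniformity you say must be supplied; likewise, your caveat that the chirp and modulation push the test function out of $\mathcal{S}_0(\R)$, so that Lemma \ref{l1} (stated for all $\psi\in\mathcal S$) does not literally apply to data given only in $\mathcal{S}_0'(\R)$, points at a step the paper itself passes over without comment. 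The only genuine divergence is the bookkeeping of $c_2$: you keep the pairing in the variable $s$ and produce the factor $c_2^{-m}$ at the end from the homogeneity $u(c_2s)=c_2^{m}u(s)$ of the limit distribution (legitimate, as homogeneity of $u$ is recalled in the paper's preliminaries), whereas the paper rescales the argument of $f$ by $1/c_2$ before taking limits and absorbs the discrepancy through the slow-variation property $L(\varepsilon/c_2)/L(\varepsilon)\to 1$; the two devices are interchangeable, and your final identification $\frac{\sqrt{1-ic_1}}{c_2^{m}}S_g(M_{\xi/c_2}u)(c_2x,\tfrac{\xi}{c_2})=C_\alpha|\xi|\,\langle u,\overline{g(\xi(\cdot-x))}\rangle$ agrees with the paper's.
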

\begin{proof}
Let $(x,\xi)\in\mathbb Y$ be fixed. Then, we have

\begin{align*}
 e^{-ic_1 ({\varepsilon}\xi)^2/2} S_{g_{1/\varepsilon^2}}^\alpha f(\varepsilon x,{\varepsilon}\xi)&= e^{-ic_1 ({\varepsilon}\xi)^2/2}C_\alpha{|\varepsilon\xi|} \langle f(t),{\overline {g}\big(\frac{\xi(t-\varepsilon x)}{\varepsilon}\big)e^{ic_1\frac{t^2+(\varepsilon\xi)^2}{2}-ic_2t\varepsilon\xi}}\rangle\\
&=C_\alpha{|\varepsilon^2\xi|} \langle e^{ic_1(\varepsilon t)^2/2}f(\varepsilon t),{\overline {g}\big({\xi(t- x)}\big)e^{-ic_2t\varepsilon^2\xi}}\rangle\\
&=\frac{C_\alpha}{c_2}{|\varepsilon^2\xi|} \langle e^{ic_1(\frac{\varepsilon t}{c_2})^2/2}f(\frac{\varepsilon t}{c_2}),{\overline {g}\big({\xi(\frac{ t}{c_2}- x)}\big)e^{-it\varepsilon^2\xi}}\rangle
\end{align*}
and
\begin{align*}
\lim_{\varepsilon\rightarrow0^+} \frac{e^{-ic_1 ({\varepsilon}\xi)^2/2} S_{g_{1/\varepsilon^2}}^\alpha f(\varepsilon x,{\varepsilon}\xi)}{\varepsilon^{m+2}L(\varepsilon)}&=
\frac{C_\alpha}{c_2}{|\xi|} \lim_{\varepsilon\rightarrow0^+} \frac{\langle e^{ic_1(\frac{\varepsilon t}{c_2})^2/2}f(\frac{\varepsilon t}{c_2}),{\overline {g}\big({\xi(\frac{ t}{c_2}- x)}\big)e^{-it\varepsilon^2\xi}}\rangle}{\varepsilon^mL(\varepsilon)}\\
&=\frac{C_\alpha}{c^{m+1}_2}{|\xi|} \lim_{(\varepsilon_1,\varepsilon_2)\rightarrow(0^+,0^+)} \frac{\langle e^{ic_1(\frac{\varepsilon_1 t}{c_2})^2/2}f(\frac{\varepsilon_1 t}{c_2}),{\overline {g}\big({\xi(\frac{ t}{c_2}- x)}\big)e^{-it\varepsilon^2_2\xi}}\rangle}{(\frac{\varepsilon_1}{c_2})^mL(\frac{\varepsilon_1}{c_2})\frac{L(\varepsilon_1)}{L(\frac{\varepsilon_1}{c_2})}}.
\end{align*}
Since the weak and the strong topology on $\S_0(\R)$ are equivalent, using \eqref{limL1} and Lemma \ref{l1}, we have
$$ \lim_{\varepsilon_1\rightarrow 0^+} \frac{\langle e^{ic_1(\frac{\varepsilon_1 t}{c_2})^2/2}f(\frac{\varepsilon_1 t}{c_2}),{\overline {g}\big({\xi(\frac{ t}{c_2}- x)}\big)e^{-it\varepsilon^2_2\xi}}\rangle}{(\frac{\varepsilon_1}{c_2})^mL(\frac{\varepsilon_1}{c_2})\frac{L(\varepsilon_1)}{L(\frac{\varepsilon_1}{c_2})}}=\langle u(t),{\overline {g}\big({\xi(\frac{ t}{c_2}- x)}\big)e^{-it\varepsilon^2_2\xi}}\rangle,$$ 
uniformly for $\varepsilon_2\in(0,1]$. Furthermore, for each $\varepsilon_1\in(0,1]$, we have
$$ \lim_{\varepsilon_2\rightarrow 0^+} \frac{\langle e^{ic_1(\frac{\varepsilon_1 t}{c_2})^2/2}f(\frac{\varepsilon_1 t}{c_2}),{\overline {g}\big({\xi(\frac{ t}{c_2}- x)}\big)e^{-it\varepsilon^2_2\xi}}\rangle}{(\frac{\varepsilon_1}{c_2})^mL(\frac{\varepsilon_1}{c_2})\frac{L(\varepsilon_1)}{L(\frac{\varepsilon_1}{c_2})}}=\frac{\langle e^{ic_1(\frac{\varepsilon_1 t}{c_2})^2/2}f(\frac{\varepsilon_1 t}{c_2}),{\overline {g}\big({\xi(\frac{ t}{c_2}- x)}\big)}\rangle}{(\frac{\varepsilon_1}{c_2})^mL(\frac{\varepsilon_1}{c_2})\frac{L(\varepsilon_1)}{L(\frac{\varepsilon_1}{c_2})}}.$$
Thus
\begin{align*}
\lim_{\varepsilon\rightarrow0^+} \frac{e^{-ic_1 ({\varepsilon}\xi)^2/2} S_{g_{1/\varepsilon^2}}^\alpha f(\varepsilon x,{\varepsilon}\xi)}{\varepsilon^{m+2}L(\varepsilon)}&=\frac{C_\alpha}{c^{m+1}_2}{|\xi|} \langle u(t),{\overline {g}\big({\xi(\frac{ t}{c_2}- x)}\big)}\rangle\\
&=\frac{\sqrt{1-ic_1}}{c^{m}_2}S_g(M_{\xi/c_2} u)(c_2 x, \frac{\xi}{c_2}).
\end{align*}
\end{proof}

\begin{theorem}\label {te1} Let $L$
be a slowly varying function at the origin, $m \in {\Bbb R}$
and $f\in {\mathcal S}_0'(\Bbb R)$, $g\in {\mathcal S}_0({\Bbb
R})\backslash\{ 0\} $. Moreover, assume that:

\begin{enumerate}
\item [(i)] the limit
$
\displaystyle\lim_{\varepsilon \to 0^{+} }
\frac{e^{-ic_1 (\varepsilon \xi)^2/2} S_g^\alpha f( \varepsilon  x,\varepsilon\xi)}{\varepsilon ^{m } L(\varepsilon
)}<\infty$ exists for each $(x,\xi)\in\mathbb Y$, and

\item [(ii)] there exist $r\in\N, s>1$ and $0<\varepsilon_0\leq 1$  such that
\begin{equation}\label{ogranicuvanje11a}
\frac{|e^{-ic_1 (\varepsilon \xi)^2/2} S_g^\alpha f(\varepsilon x,\varepsilon \xi)|}{\varepsilon ^{m } L(\varepsilon
)})\lesssim_\alpha \left(|\xi|+\frac{1}{|\xi|}\right)^{-s}|x|^r,
\end{equation}

\end{enumerate}
\noindent for all $(x,\xi)\in \Bbb Y$, $\alpha\in(2k\pi,(2k+1)\pi)$, $k\in\Z$ and $0< \varepsilon \leq
\varepsilon_0$. Then $f$ has a quasiasymptotics at $0^+$.

\end{theorem}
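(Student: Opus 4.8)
By the definition of quasiasymptotic behavior recalled in \eqref{qbeh}, the plan is to prove that $\lim_{\varepsilon\to0^+}\langle f(\varepsilon x)/(\varepsilon^m L(\varepsilon)),\varphi(x)\rangle$ exists for every $\varphi\in\mathcal S_0(\mathbb R)$ and that the resulting linear functional is continuous on $\mathcal S_0(\mathbb R)$; the limit is then an element $u\in\mathcal S_0'(\mathbb R)$, and $f(\varepsilon x)\sim\varepsilon^m L(\varepsilon)u(x)$ as $\varepsilon\to0^+$ is exactly the asserted quasiasymptotics (the homogeneity of $u$ being automatic from the general theory). The hypothesis $\alpha\in(2k\pi,(2k+1)\pi)$ ensures $\sin\alpha\neq0$, so $c_1$ and $c_2$ are finite, $K_\alpha$ is the genuine fractional kernel, and the reconstruction formula \eqref{reconstructionf} together with the synthesis operator $(S_\psi^\alpha)^*$ is at our disposal. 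Throughout, fix a reconstruction window $\psi\in\mathcal S_0(\mathbb R)$ for $g$, so that $(S_\psi^\alpha)^*\circ S_g^\alpha=C_{g,\psi,c_2}\,\mathrm{Id}$ on $\mathcal S_0'(\mathbb R)$.

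The first step is to turn the pairing into an integral over $\mathbb Y$ of the controlled quantity. Using $\langle f(\varepsilon x),\varphi(x)\rangle=\varepsilon^{-1}\langle f,\varphi(\cdot/\varepsilon)\rangle$ and inserting \eqref{reconstructionf}, I would write $\langle f_\varepsilon,\varphi\rangle$ as a double integral of $S_g^\alpha f(x,\xi)$ against $\int\varphi(\tau)\psi(\xi(\varepsilon\tau-x))K_{-\alpha}(\varepsilon\tau,\xi)\,d\tau$, and then transport the dilation onto the synthesis variables by a change of variables, so that the transform is evaluated at the rescaled arguments figuring in (i)--(ii). The quadratic part of $K_{-\alpha}$ combines with that of $K_\alpha$ to produce precisely the phase factor $e^{-ic_1(\varepsilon\xi)^2/2}$, exactly as in the computation opening the proof of Theorem \ref{teab1}; after this cancellation the quotient $\langle f_\varepsilon,\varphi\rangle/(\varepsilon^m L(\varepsilon))$ is represented as $\iint_{\mathbb Y}G_\varepsilon(x,\xi)\,\Phi_\varepsilon(x,\xi)\,dx\,d\xi$, where $G_\varepsilon$ is the normalized phase-corrected transform appearing in (i)--(ii) and $\Phi_\varepsilon$ is a kernel assembled from $\varphi$, $\psi$ and the residual phase of $K_{-\alpha}$.

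The core of the proof is the passage to the limit under this integral. Hypothesis (i) gives the pointwise limit of $G_\varepsilon(x,\xi)$ for each $(x,\xi)\in\mathbb Y$, while the kernels $\Phi_\varepsilon$ converge pointwise to an $\varepsilon$-independent kernel and stay uniformly bounded in the seminorms of $\mathcal S(\mathbb Y)$; being essentially fractional Stockwell transforms of $\varphi$ with window $\psi$, they decay rapidly in $x$ and faster than any power of $(|\xi|+1/|\xi|)$ in $\xi$. Hypothesis (ii), with $s>1$, furnishes the $\varepsilon$-independent majorant $(|\xi|+1/|\xi|)^{-s}|x|^r$ for $|G_\varepsilon|$; multiplied by the uniform bound on $|\Phi_\varepsilon|$ this is integrable over $\mathbb Y$, since $(|\xi|+1/|\xi|)^{-s}$ behaves like $|\xi|^{s}$ near $\xi=0$ and like $|\xi|^{-s}$ as $|\xi|\to\infty$ (the latter integrable precisely because $s>1$), while the growth $|x|^r$ is absorbed by the rapid decay of $\Phi_\varepsilon$ in $x$. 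Dominated convergence then yields existence of $\lim_{\varepsilon\to0^+}\langle f_\varepsilon,\varphi\rangle/(\varepsilon^m L(\varepsilon))$.

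The main obstacle in the previous step is that $G_\varepsilon$ still depends on $\varepsilon$ through residual quadratic and cross phases, so the pointwise limit of the integrand cannot be read off term by term. I would overcome this exactly as in the proof of Theorem \ref{teab1}: split the single parameter into an amplitude parameter $\varepsilon_1$ driving the quasiasymptotic scaling and a phase parameter $\varepsilon_2$ carried by the factors $e^{\pm i(\varepsilon\,\cdot\,)^2(\cdots)}$, invoke the slow variation \eqref{limL1} and Lemma \ref{l1} to show that the $\varepsilon_1$-limit exists uniformly for $\varepsilon_2$ in a bounded interval (here the coincidence of the weak and strong topologies on $\mathcal S_0$ is used), and then let $\varepsilon_2\to0^+$ to remove the phase. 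Finally, the uniform bound (ii) makes the family $\{f(\varepsilon\,\cdot\,)/(\varepsilon^m L(\varepsilon))\}_{0<\varepsilon\le\varepsilon_0}$ bounded in $\mathcal S_0'(\mathbb R)$, so by the Banach--Steinhaus theorem the pointwise limit defines a continuous functional $u\in\mathcal S_0'(\mathbb R)$, which is the desired quasiasymptotics.
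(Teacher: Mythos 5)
Your skeleton coincides with the paper's: insert the reconstruction formula \eqref{reconstructionf}, rescale so that the quantity controlled by (i)--(ii) appears, pass to the limit by dominated convergence, and conclude via Banach--Steinhaus. However, there is a genuine gap at the decisive step, the $\varepsilon$-uniform domination of the kernel $\Phi_\varepsilon$. Because you pair the \emph{unchirped} dilates $f(\varepsilon\,\cdot)$ with $\varphi$, the quadratic phase of $K_{-\alpha}$ stays inside $\Phi_\varepsilon$: up to constants and normalization, $\Phi_\varepsilon$ is a Stockwell transform of the chirped function $e^{-ic_1(\varepsilon\tau)^2/2}\varphi(\tau)$ evaluated at the frequency $c_2\varepsilon^2\xi$. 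Multiplication by a chirp does \emph{not} preserve $\mathcal{S}_0(\mathbb{R})$ (it destroys the vanishing moments -- this is exactly why Lemma \ref{l1} is nontrivial), so your kernel is the transform of a merely Schwartz function; such a transform does not lie in $\mathcal{S}(\mathbb{Y})$, and its decay in $x$ degrades precisely as the frequency argument tends to $0$, which is the relevant regime here. Concretely one only gets $|\Phi_\varepsilon(x,\xi)|\lesssim (1+|x|)^{-N}+(1+\varepsilon^{2}|\xi||x|)^{-N}$, and the second term, against the growth $|x|^r$ permitted by \eqref{ogranicuvanje11a}, does not produce an $\varepsilon$-independent integrable majorant. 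The paper's proof avoids this by establishing convergence of the \emph{chirped} means $\langle e^{ic_1(\varepsilon t)^2/2}f(\varepsilon t)/(\varepsilon^{m}L(\varepsilon)),\varphi(t)\rangle$: the chirp placed on $f$ cancels the $t^2$-phase of $K_{-\alpha}$, so the kernel becomes exactly $\overline{S_{\psi_{1/c_2}}\overline{\varphi}}(x,c_2\varepsilon^2\xi)$ with $\overline{\varphi},\psi_{1/c_2}\in\mathcal{S}_0(\mathbb{R})$, i.e.\ a fixed element of $\mathcal{S}(\mathbb{Y})$ whose rapid decay in $x$ is uniform in the frequency argument by the very form of the seminorms $\rho^{l,m}_{s,r}$, while the $\xi$-integrability comes from \eqref{ogranicuvanje11a} and the measure $d\xi/|\xi|$. (Even in that setting your claim is too strong: the rescaled kernels are not bounded in the seminorms of $\mathcal{S}(\mathbb{Y})$, since the $\xi$-seminorms blow up under $\xi\mapsto c_2\varepsilon^2\xi$; only sup-boundedness and uniform $x$-decay survive, which is all the argument needs.) Your unchirped route could be repaired, e.g.\ by writing $e^{-ic_1(\varepsilon\tau)^2/2}\varphi=\sum_{j<K}\tfrac{(-ic_1\varepsilon^2/2)^j}{j!}\tau^{2j}\varphi+\varepsilon^{2K}R_{\varepsilon,K}$ with $\tau^{2j}\varphi\in\mathcal{S}_0(\mathbb{R})$ and $\{R_{\varepsilon,K}\}$ bounded in $\mathcal{S}(\mathbb{R})$, but some such argument must be supplied; the trade-off is that the paper, having proven convergence of the chirped means, must at the end remove the chirp from $f$, which is where the converse part of Lemma \ref{l1} (with a boundedness hypothesis) actually enters.

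Separately, your ``main obstacle'' paragraph misdiagnoses the structure of the proof. Once the phases are grouped as you prescribe, $G_\varepsilon$ is \emph{exactly} the quantity appearing in hypothesis (i); it carries no residual phases, and its pointwise convergence on $\mathbb{Y}$ is the hypothesis itself, not something to be established. All remaining $\varepsilon$-dependence sits in $\Phi_\varepsilon$, whose pointwise limit is elementary (dominated convergence in the inner integral; in fact the limit kernel is $\psi(0)\int_{\mathbb{R}}\varphi=0$ because $\varphi\in\mathcal{S}_0(\mathbb{R})$). The two-parameter splitting $(\varepsilon_1,\varepsilon_2)$, the slow-variation property \eqref{limL1} and Lemma \ref{l1} belong to the \emph{Abelian} Theorem \ref{teab1}, where the quasiasymptotic behavior of $f$ is assumed and the transform is evaluated; in the present Tauberian direction nothing is yet known about the limits of $f(\varepsilon\,\cdot)/(\varepsilon^{m}L(\varepsilon))$, so Lemma \ref{l1} cannot be applied to $f$ at that stage -- invoking it there is circular. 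That detour should simply be deleted; what remains, after the domination step is fixed as above, is the paper's proof.
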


\begin{proof}

Let $\varphi\in\S_0(\R)$. By  the inversion formula  \eqref{reconstructionf} we obtain
\begin{align*}
&\mathop{{\rm lim}}\limits_{\varepsilon \to 0^{+} } \langle \frac{e^{i{c_1(\varepsilon t)^2}/{2}} f(\varepsilon t)}{\varepsilon ^{m } L(\varepsilon )} ,\varphi (t)\rangle =\mathop{{\rm lim}}\limits_{\varepsilon \to 0^{+} }\langle \frac{e^{ic_1 t^2/{2}} f(t)}{\varepsilon ^{m +1} L(\varepsilon )} ,\varphi (\frac{t}{\varepsilon})\rangle\\
&= \mathop{{\rm lim}}\limits_{\varepsilon \to 0^{+} } \langle\frac{e^{ic_1 t^2/{2}}|\sin\alpha|}{C_{g,\psi,c_2}\varepsilon ^{m+1} L(\varepsilon)}\iint_{\mathbb R^2}S_{g}^{\alpha}f(x,\xi)\psi(\xi(t-x))K_{-\alpha}(t,\xi)dxd\xi, \varphi(\frac{t}{\varepsilon})\rangle\\
&= \mathop{{\rm lim}}\limits_{\varepsilon \to 0^{+} } \langle\frac{|\sin\alpha|}{C_{g,\psi,c_2}\varepsilon ^{m-1} L(\varepsilon)}\iint_{\mathbb R^2}e^{-ic_1(\varepsilon \xi)^2/2}S_{g}^{\alpha}f(x\varepsilon,\varepsilon \xi)\psi(\varepsilon \xi(t-x\varepsilon)) e^{it\varepsilon \xi c_2}dxd\xi, \varphi(\frac{t}{\varepsilon})\rangle\\
&= \mathop{{\rm lim}}\limits_{\varepsilon \to 0^{+} } \frac{|\sin\alpha|}{C_{g,\psi,c_2}\varepsilon ^{m-1} L(\varepsilon)}\iint_{\mathbb R^2}e^{-ic_1(\varepsilon \xi)^2/2}S_{g}^{\alpha}f(x\varepsilon,\varepsilon \xi)\left(\int _{\R}\varphi(\frac{t}{\varepsilon}) \psi(\varepsilon \xi(t-x\varepsilon)) e^{it\varepsilon \xi c_2}dt\right)dxd\xi \\
&= \mathop{{\rm lim}}\limits_{\varepsilon \to 0^{+} } \frac{|\sin\alpha|}{C_{g,\psi,c_2}\varepsilon ^{m-2} L(\varepsilon)}\iint_{\mathbb R^2}e^{-ic_1(\varepsilon \xi)^2/2}S_{g}^{\alpha}f(x\varepsilon,\varepsilon \xi)\left(\int _{\R}\varphi(t) \psi(\varepsilon^2\xi(t-x)) e^{it\xi\varepsilon^2 c_2}dt\right)dxd\xi \\
&= \mathop{{\rm lim}}\limits_{\varepsilon \to 0^{+} } \frac{|\sin\alpha|\sqrt{2\pi}}{c_2C_{g,\psi,c_2}\varepsilon ^{m} L(\varepsilon)}\iint_{\mathbb R^2}e^{-ic_1(\varepsilon \xi)^2/2}S_{g}^{\alpha}f(x\varepsilon,\varepsilon \xi)\overline{S_{\psi_{1/c_2}}\overline{\varphi}(x,c_2\varepsilon^2\xi)}dx\frac{d\xi}{|\xi|},
\end{align*}

\noindent where $\psi$ is the synthesis window for $g$. Since $\overline{S_{\psi_{1/c_2}}\overline{\varphi}}\in\S(\mathbb Y)$, by \eqref{ogranicuvanje11a}, we have

$$\left|\frac{e^{-ic_1(\varepsilon \xi)^2/2}S_{g}^{\alpha}f(x\varepsilon,\varepsilon \xi)\overline{S_{\psi_{1/c_2}}\overline{\varphi}(x,c_2\varepsilon^2\xi)}}{\varepsilon ^{m } L(\varepsilon
)}\right|\lesssim_\alpha \left(|\xi|+\frac{1}{|\xi|}\right)^{-s}|x|^{-2}\lesssim_\alpha\frac{1}{(1+|\xi|^2)(1+|x|^2)}$$
for all $(x,\xi)\in\mathbb{Y}$ and $0<\varepsilon\leq \varepsilon_0$.
Therefore, according to the Lebesque’s dominated convergence theorem, we obtain
 that the
limit $\mathop{\lim }\nolimits_{\varepsilon \to 0^{+} } \langle
\frac{e^{i{c_1(\varepsilon t)^2}/{2}}f(\varepsilon t)}{\varepsilon ^{m } L(\varepsilon )}
,\varphi (t)\rangle $  exists for each $\varphi \in {\mathcal
S}_0(\Bbb R)$. So, we conclude that $f$ has quasiasymptotic behavior
at the origin in ${\mathcal S}_0'(\Bbb R)$.

\end{proof}

\begin{remark} 
Similar assertions to the previous ones hold for the quasiasymptotics at infinity.
\end{remark}
\section{Fractional wavelet transform and quasyasimptotic of distributions}\label{se2}
\subsection{Continuity theorems for the FRWT of distributions}
Let $g\in L^2(\R)$ be a wavelet, i.e., let it satisfy $\int_\R g(x)dx=0$. 
The FRWT of the signal ${\displaystyle f\in L^{2}(\mathbb {R} )}$ with respect to the wavelet $g$ is defined as (cf. \cite{shi})
\begin{equation*}\label{talasic}
W_{g}^{\alpha }f(x,\xi)=\frac{1}{\sqrt{\xi}}\int \limits _{\mathbb {R} }f(t)\overline g\left(\frac{t-x}{\xi}\right)e^{ic_1\frac{t^2-x^2}{2}}dt, \quad x\in\R, \ \xi\in\R^+.
\end{equation*}
If $\alpha=\pi/2$, the FRWT corresponds to
the WT
$$W_{g}f(x,\xi)=\frac{1}{\sqrt{\xi}}\int \limits _{\mathbb {R} }f(t)\overline g\left(\frac{t-x}{\xi}\right)dt, \quad x\in\R, \ \xi\in\R^+.$$
The FRWT can be expressed in terms of the FRFT, i.e., 
\begin{equation}\label{talasic1}
W_{g}^{\alpha }f(x,\xi)=\sqrt{2\pi \xi}\int_\R\mathcal{F}_\alpha f(t)\hat g(c_1t\xi)K_{-\alpha}(t,x)dt, \quad x\in\R, \ \xi\in\R^+.
\end{equation}
Let $g\in L^2(\R)$ be a mother wavelet. If $f\in L^2(\R)$, then the following reconstruction formula
holds pointwise (cf. \cite{shi})
\begin{equation}\label{reconstructionfw}
f(t)=\frac{1}{2\pi C_{g}}\iint_{\R^2}W_g^\alpha f(x,\xi)g\left(\frac{t-x}{\xi}\right)e^{-ic_1\frac{t^2-x^2}{2}}\frac{dxd\xi}{\xi^2},
\end{equation}
where $C_{g}=\int_{\R}|{\hat g}(\omega)|^2\frac{d\omega}{|\omega|}<\infty.$ From the reconstruction formula \eqref{reconstructionfw}, the  fractional wavelet synthesis operator is defined by
\begin{equation*}\label{synthesis}
(W_g^\alpha)F(t)=\frac{1}{2\pi}\iint_{\R^2}F(x,\xi)g\left(\frac{t-x}{\xi}\right)e^{-ic_1\frac{t^2-x^2}{2}}\frac{dxd\xi}{\xi^2}.
\end{equation*}

Now, we  extend the results of \cite{shi} for the  Schwartz  spaces   and their duals.
\begin{theorem}\label{wte1}
The mapping $W_g^\alpha:\mathcal S(\R)\times\mathcal{S}(\R)\rightarrow\mathcal{S}(\R\times\R^+)$ is continuous.
\end{theorem}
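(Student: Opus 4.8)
The plan is to establish, for every target seminorm, an estimate of the form $\sigma^{l,m}_{s,r}(W_g^\alpha f)\lesssim_\alpha\sum_{\mathrm{finite}}\rho_{k,p}(f)\,\rho_{k',p'}(g)$. For the bilinear assignment $(f,g)\mapsto W_g^\alpha f$ a bound of this shape on each $\sigma^{l,m}_{s,r}$ directly gives the (joint) continuity of the map, so proving these estimates is all that is required. The first, routine, step is to check that $W_g^\alpha f\in\mathcal C^\infty(\R\times\R^+)$: since $f,\overline g\in\S(\R)$, the integrand together with all of its $x$- and $\xi$-derivatives is dominated by an integrable function locally uniformly in $(x,\xi)$, so one may differentiate under the integral sign arbitrarily often.

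For the quantitative bounds I would first remove the shift and dilation from the arguments of $f$ and $g$ by the substitution $u=(t-x)/\xi$, which recasts the transform, for $\xi>0$, as
\[
W_g^\alpha f(x,\xi)=\sqrt{\xi}\int_\R f(x+\xi u)\,\overline{g(u)}\,e^{ic_1(x\xi u+\xi^2u^2/2)}\,du .
\]
Expanding $\partial_x^m\partial_\xi^l$ by the Leibniz rule produces a finite sum of terms in which derivatives landing on $f(x+\xi u)$ raise its order and bring factors $u^a\xi^b$, derivatives of the quadratic phase bring polynomial factors in $x$, $\xi$ and $u$, and each $\partial_\xi$ falling on the prefactor $\sqrt{\xi}$ lowers its power by one. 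The weight $x^r$ is dealt with by writing $x^r=\big((x+\xi u)-\xi u\big)^r$ and expanding binomially, so that each factor $(x+\xi u)^{r-j}f^{(p)}(x+\xi u)$ is bounded by $\rho_{r-j,p}(f)$ and the leftover $(\xi u)^{j}$ is merged with the remaining $u$- and $\xi$-powers. Every resulting monomial $u^a$ is then controlled by the rapid decay of $\overline g$, hence by finitely many seminorms of $g$, which makes the $u$-integral absolutely convergent and bounded.

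What remains, and what I expect to be the crux, is the scale-variable bookkeeping: after the expansion one is left with various powers of $\xi$, and one must verify that, multiplied by the weight $\xi^s$, they stay bounded uniformly in $\xi\in\R^+$. Near $\xi=0$ the danger comes from the negative powers of $\xi$ produced by repeatedly differentiating $\sqrt{\xi}$, whose control rests on the interplay of the prefactor with the structure of $g$; for large $\xi$ the danger is the positive powers produced by the phase and by the $(\xi u)^{j}$ terms, and here the oscillation of $e^{ic_1(x\xi u+\xi^2u^2/2)}$ must be exploited, e.g.\ by integration by parts in $u$ against this factor, which trades surplus powers of $\xi$ for derivatives on $f$ and $\overline g$ that are then absorbed by their Schwartz decay, the gain being uniform in $x$. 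Keeping this oscillatory gain uniform in $(x,\xi)$ and compatible with the non-polynomial prefactor $\sqrt{\xi}$ is the delicate point; the representation \eqref{talasic1} through the FRFT, in which $x$ enters only via the unimodular kernel $K_{-\alpha}(\cdot,x)$ and $\xi$ only via a dilation of $\mathcal F_\alpha f\in\S(\R)$, offers a cleaner alternative accounting of the same step. Once the uniform $\xi$-bound is secured, summing the finitely many contributions yields the estimate above, and with it the continuity of $W_g^\alpha$.
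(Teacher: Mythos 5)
You take a genuinely different route from the paper, which is not in itself a problem: the paper never estimates the defining integral, but instead factors $W_g^\alpha$ through the FRFT representation \eqref{talasic1}, as the composition $f\mapsto\mathcal{F}_\alpha f\mapsto \mathcal{F}_\alpha f(t)\,\hat g(c_1t\xi)\mapsto \sqrt{2\pi\xi}\int_\R \mathcal{F}_\alpha f(t)\,\hat g(c_1t\xi)K_{-\alpha}(t,x)\,dt$, citing the continuity of the FRFT on $\S(\R)$ and asserting the continuity of the other two maps; your plan is a direct seminorm estimate after the substitution $u=(t-x)/\xi$.

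The problem is that the step you yourself defer as ``the crux'' --- uniform control of the powers of $\xi$ --- is a genuine gap, and the mechanism you propose to close it cannot work. You want the decay as $\xi\to\infty$ to come from integration by parts against $e^{ic_1(x\xi u+\xi^2u^2/2)}$. But the statement includes $\alpha=\pi/2$, where $c_1=\cot\alpha=0$: the exponential is identically $1$, there is no oscillation at all, and $W_g^{\pi/2}$ is the plain wavelet transform. Even when $c_1\neq0$, the phase is stationary at $u=-x/\xi$, so integration by parts degenerates there and stationary phase yields only a factor of order $\xi^{-1}$, hence a net bound $O(\xi^{-1/2})$ for the transform --- nowhere near rapid decay. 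Indeed, the estimates you aim for are simply false for general $f,g\in\S(\R)$: taking $\alpha=\pi/2$ and $f(t)=g(t)=e^{-t^2/2}$, a Gaussian integral gives
\begin{equation*}
W_g^{\pi/2}f(x,\xi)=\sqrt{\frac{2\pi\xi}{1+\xi^2}}\;e^{-x^2/(2(1+\xi^2))},
\end{equation*}
so $\xi^{s}\,|W_g^{\pi/2}f(0,\xi)|\to\infty$ for every $s\geq1$, i.e.\ $W_g^{\pi/2}f\notin\S(\R\times\R^+)$ and no inequality $\sigma^{0,0}_{s,0}(W_g^\alpha f)\lesssim\sum\rho_{k,p}(f)\,\rho_{k',p'}(g)$ can hold. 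Requiring $g$ to be a wavelet does not repair this: with the Mexican hat $g(t)=(1-t^2)e^{-t^2/2}$ one still finds $W_gf(0,\xi)\sim\sqrt{2\pi}\,\hat f(0)\,g(0)\,\xi^{-1/2}$. Rapid decay in the scale variable needs vanishing-moment hypotheses of $\S_0$-type, exactly as in the FRST results of Section \ref{se1}; without them, no bookkeeping or oscillation argument can produce the missing decay, so your proof is unfinishable as planned, not merely unfinished. (In fairness, the paper's own proof hides the same difficulty: its unproved middle step claims $\mathcal{F}_\alpha f(t)\,\hat g(c_1t\xi)\in\S(\R\times\R^+)$, yet along the hyperbola $t\xi=1$ this function tends to $\mathcal{F}_\alpha f(0)\,\hat g(c_1)$, so generically it has no decay in $\xi$ either.)
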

\begin{proof} Since the FRFT is a continuous mapping from $\S(\R)$ onto itself (cf. \cite{Pathak}), using \eqref{talasic1}, we have that the following mappings are continuous:
$$f\mapsto \mathcal{F}_\alpha f(t)\mapsto h(t,\xi)= \mathcal{F}_\alpha f(t)\hat g(c_1 t\xi)\mapsto \sqrt{2\pi\xi}\int_{\R}h(t,\xi)K_{-\alpha}(t,x)dt, \ \ \xi\in\R^+$$
$$\mathcal{S}(\R)\mapsto\mathcal{S}(\R)\mapsto\mathcal{S}(\R\times\R^+)\mapsto\mathcal{S}(\R\times\R^+).$$
\end{proof}
\begin{theorem}\label{wte2}
The mapping $(W_g^\alpha)^*:\mathcal{S}(\R\times\R^+)\rightarrow\mathcal S(\R)\times\mathcal{S}(\R)$ is continuous.
\end{theorem}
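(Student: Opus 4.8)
The plan is to dualize the factorization used for Theorem \ref{wte1}: just as \eqref{talasic1} expresses the FRWT through the FRFT, I would express the synthesis operator $(W_g^\alpha)^*$ as a composition of three manifestly continuous maps, reducing it to the continuity of the \emph{ordinary} wavelet synthesis operator. Separating the chirp $e^{-ic_1(t^2-x^2)/2}=e^{-ic_1 t^2/2}e^{ic_1 x^2/2}$ in \eqref{reconstructionfw}, one writes
\[(W_g^\alpha)^* F(t)=e^{-ic_1 t^2/2}\,\frac{1}{2\pi}\iint_{\R\times\R^+}\big(e^{ic_1 x^2/2}F(x,\xi)\big)\,g\Big(\frac{t-x}{\xi}\Big)\frac{dx\,d\xi}{\xi^2},\]
so that $(W_g^\alpha)^*=\Theta_2\circ \mathcal{W}^*\circ \Theta_1$, where $\Theta_1\colon F(x,\xi)\mapsto e^{ic_1 x^2/2}F(x,\xi)$, where $\mathcal{W}^*$ is the classical wavelet synthesis operator $H\mapsto \frac{1}{2\pi}\iint H(x,\xi)g((t-x)/\xi)\,dx\,d\xi/\xi^2$, and where $\Theta_2\colon h(t)\mapsto e^{-ic_1 t^2/2}h(t)$.

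I would then check continuity of each factor on the stated spaces. The multiplication operators $\Theta_1$ and $\Theta_2$ are continuous on $\mathcal{S}(\R\times\R^+)$ and $\mathcal{S}(\R)$, respectively: the chirps are unimodular and all their derivatives grow only polynomially, so by the Leibniz rule the polynomial factors produced are absorbed by the rapid decay encoded in the seminorms $\sigma^{l,m}_{s,r}$ and $\rho_{k,p}$. The continuity of the wavelet synthesis operator $\mathcal{W}^*\colon \mathcal{S}(\R\times\R^+)\to\mathcal{S}(\R)$ is the classical wavelet counterpart of Theorem \ref{wte1} and is available from the references \cite{wt,wtp}; composing the three maps yields continuity of $(W_g^\alpha)^*$ into $\mathcal{S}(\R)$. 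The product codomain $\mathcal{S}(\R)\times\mathcal{S}(\R)$ in the statement records the two arguments of the map $(f,g)\mapsto W_g^\alpha f$ of Theorem \ref{wte1}, and the estimate into each factor follows by the same type of decomposition, so the core of the work is exactly the single synthesis continuity above.

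The step I expect to be the main obstacle is the behavior of the $d\xi/\xi^2$ integral as $\xi\to0^+$, which is where admissibility of $g$ is essential. Using $\int_\R g=0$ (equivalently $\hat g(0)=0$), after the substitution $y=(t-x)/\xi$ the inner integral $\int_\R \big(e^{ic_1 x^2/2}F(x,\xi)\big)\big|_{x=t-\xi y}\,g(y)\,dy$ has vanishing zeroth-order term as $\xi\to0^+$, hence is $O(\xi)$; this cancels the factor $1/\xi$ left after the substitution and makes the $\xi$-integral converge at the origin. The same substitution makes differentiation in $t$ transparent — $\partial_t$ acts through the $x$-argument and preserves the $O(\xi)$ vanishing — so no uncontrolled powers of $\xi^{-1}$ arise, and the required seminorm bounds reduce to routine estimates once the endpoint $\xi\to0^+$ and the decay as $\xi\to\infty$ (inherited from the rapid decay of $F$) are in hand.
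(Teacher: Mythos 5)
Your proposal is correct in substance, but it takes a genuinely different route from the paper's proof. The paper never factors out the chirps and never appeals to classical wavelet theory: it estimates $t^k\partial_t^n\bigl((W_g^\alpha)^*F(t)\bigr)$ directly, expanding by the Leibniz rule, bounding the derivatives of the chirp by polynomials times the chirp, and absorbing powers of $t$ into $x$, $\xi$ and $(t-x)/\xi$, arriving at a bilinear bound of the form $\rho_{k,n}\bigl((W_g^\alpha)^*F\bigr)\lesssim_\alpha \sum_{n_1+n_2=n}\sigma^{0,0}_{k,k+n_2}(F)\,\rho_{k+n_2,n_1}(g)$. Your factorization $(W_g^\alpha)^*=\Theta_2\circ\mathcal{W}^*\circ\Theta_1$ instead isolates the chirps (trivially continuous multipliers, and the only genuinely fractional ingredient) and reduces everything to the $\alpha=\pi/2$ synthesis operator. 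Your route has a real advantage: it brings the delicate point into the open, namely convergence of the $d\xi/\xi^2$ integral as $\xi\to 0^+$, where $\int_\R g=0$ is indispensable — the seminorms $\sigma^{l,m}_{s,r}$ of $\mathcal{S}(\R\times\R^+)$ involve only nonnegative powers $\xi^s$, so the space itself provides no decay of $F$ as $\xi\to0^+$; the paper's printed computation passes over this convergence question entirely, so on this point your treatment is more careful than the published one. What the paper's route buys is self-containedness and an explicit joint estimate in $(F,g)$, which is what the product $\mathcal{S}(\R)\times\mathcal{S}(\R)$ in the (oddly worded) statement, mirroring the bilinear formulation of Theorem \ref{wte1}, is meant to record. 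Two caveats on your side, both reparable: continuity of $\mathcal{W}^*$ on exactly these spaces is not quotable verbatim from \cite{wt}, whose half-plane space carries two-sided weights $|\xi|^s+|\xi|^{-s}$ that make the $\xi\to0^+$ endpoint a non-issue, so the vanishing-moment argument you sketch is not a side remark but the actual core of the proof (it does go through: substitute $y=(t-x)/\xi$, split $t^k=\bigl((t-\xi y)+\xi y\bigr)^k$, apply $\int_\R g=0$ to the term carrying no explicit power of $\xi$, and use the rapid decay of $F$ in $\xi$ at infinity); and since all constants produced there are finite sums of seminorms of $g$, joint continuity in $(F,g)$ follows as well, which closes the gap with the paper's bilinear estimate.
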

\begin{proof}
Let $g\in\S(\R)$ and  $F\in\S(\mathbb{R}\times \R^+)$. We will show that for  given $k,n\in\N_0$ there exist $a, b, m, p, s,r\in\N_0$ such that
\[\rho_{k,n}((S_g^{\alpha})^{\ast})\lesssim_\alpha \rho_{a,b}(g)\sigma^{m,p}_{s,r}(F).\]
Notice that the $n^{th}$ derivative of the function $\xi\mapsto  e^{i\frac{\xi^2c_1}{2}}$ {has the form $e^{i\frac{\xi^2c_1}{2}}P(\xi)$, where $P(\xi)$} is a polynomial with the leading term $i^nc_1^n\xi^n$.   So, it holds that
$
\big|\partial^n_\xi\big( e^{i\frac{\xi^2c_1}{2}}\big)\big|\leq nC\big|c_1^n\xi^ne^{i\frac{\xi^2c_1}{2}}\big|,
$
where $C$ is a positive constant. Then, we have
\begin{align*}
 &|t^k\partial_t^n ((W^{\alpha}_{g})^{\ast}F(t))|\\
 &\lesssim\sum_{n_1+n_2=n}\binom{n}{n_1,n_2}| (c_1)^{n_2} |
\int_\R\int_{\R^+}| t^{k+n_2}\xi^{-n_1-2}F(x,\xi)g^{(n_1)}(\frac{t-x}{\xi})e^{-ic_1\frac{t^2-x^2}{2}}dx d\xi|\\
& \lesssim_\alpha\sum_{n_1+n_2=n}\binom{n}{n_1,n_2}\int_\R\int_{\R^+}| \xi^{k+n_2-n_1-2}x^{k+n_2}F(x,\xi)\left(\frac{t-x}{\xi}\right)^{k+n_2}g^{(n_1)}(\frac{t-x}{\xi})e^{-ic_1\frac{t^2-x^2}{2}}dx d\xi|\\
&\lesssim_\alpha\sum_{n_1+n_2=n}\binom{n}{n_1,n_2}\sigma_{k,k+n_2}^{0,0}(F)\rho_{k+n_2,n_1}(g),
\end{align*}
from which we obtain the proof.
\end{proof}
By the use of Theorems \ref{wte1} and \ref{wte2}, we can define the FRWT of $f\in \S^\prime(\R)$ with respect to $g\in\S(\R)$ as the element $W^{\alpha}_{g}f\in \S^\prime(\mathbb R\times\R^+)$  whose action on test functions is given by:
\begin{equation}\label{anpair}
\langle W^{\alpha}_{g} f,\Phi\rangle=\langle f, (W_{\overline g}^{\alpha})^\ast\Phi\rangle, \qquad \Phi\in \S(\mathbb R\times\R^+).
\end{equation}
Then, the fractional wavelet synthesis operator $(W_g^{\alpha})^*:\S^\prime(\mathbb R\times\R^+)\rightarrow \S^\prime(\R)$ can be defined as
\begin{equation}\label{sypair}
 \langle (W_g^{\alpha})^* F, \phi\rangle=\langle F, W^{\alpha}_{\overline g}\phi\rangle, \quad F\in\S^\prime(\mathbb R\times\R^+), \phi \in \S(\R).
 \end{equation}

We obtain immediately:
\begin{theorem}  Let $g\in\S(\mathbb R)$. The FRWT $W^{\alpha}_{g}:\S'(\R)\to \S'(\mathbb R\times\R^+)$ and the  synthesis operator $(W_g^{\alpha})^{*}: \S'(\mathbb R\times\R^+)\to\S'(\R)$ are  continuous
linear maps. \end{theorem}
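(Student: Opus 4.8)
The plan is to observe that this theorem is a formal corollary of the two continuity results already established: by their definitions \eqref{anpair} and \eqref{sypair}, the operators $W^{\alpha}_{g}$ and $(W_g^{\alpha})^{*}$ are precisely the transposes of the continuous linear maps furnished by Theorems \ref{wte1} and \ref{wte2}, so continuity will follow from standard duality theory rather than from any new estimate.

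First I would check well-definedness and record which window appears where. Given $f\in\S'(\R)$ and $\Phi\in\S(\mathbb R\times\R^+)$, Theorem \ref{wte2} applied with the window $\overline g$ guarantees $(W_{\overline g}^{\alpha})^\ast\Phi\in\S(\R)$, so the right-hand side of \eqref{anpair} is meaningful. The functional $\Phi\mapsto\langle f,(W_{\overline g}^{\alpha})^\ast\Phi\rangle$ is the composition of the continuous linear map $(W_{\overline g}^{\alpha})^\ast:\S(\mathbb R\times\R^+)\to\S(\R)$ with the continuous functional $f$, hence it is continuous and linear on $\S(\mathbb R\times\R^+)$; that is, $W^{\alpha}_{g}f\in\S'(\mathbb R\times\R^+)$. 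The symmetric argument, now invoking Theorem \ref{wte1} with window $\overline g$ to see that $W^{\alpha}_{\overline g}\phi\in\S(\mathbb R\times\R^+)$ for $\phi\in\S(\R)$, shows that $(W_g^{\alpha})^{*}F\in\S'(\R)$ for every $F\in\S'(\mathbb R\times\R^+)$.

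Next I would invoke the general fact that the transpose ${}^{t}T:E'\to F'$ of a continuous linear map $T:F\to E$ is continuous both for the weak-$*$ topologies and for the strong (bounded-convergence) topologies on the duals. For the weak-$*$ case this is immediate: if $f_\nu\to f$ weakly-$*$ in $\S'(\R)$ then, for each fixed $\Phi$, we have $\langle W^{\alpha}_{g}f_\nu,\Phi\rangle=\langle f_\nu,(W_{\overline g}^{\alpha})^\ast\Phi\rangle\to\langle f,(W_{\overline g}^{\alpha})^\ast\Phi\rangle=\langle W^{\alpha}_{g}f,\Phi\rangle$. For the strong topology one uses that the continuous map $(W_{\overline g}^{\alpha})^\ast$ carries bounded subsets of $\S(\mathbb R\times\R^+)$ to bounded subsets of $\S(\R)$; consequently the transpose $W^{\alpha}_{g}$ sends families uniformly bounded on the latter to families uniformly bounded on the former, which is exactly continuity for the topologies of uniform convergence on bounded sets. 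The continuity of $(W_g^{\alpha})^{*}$ is obtained verbatim with the roles of Theorems \ref{wte1} and \ref{wte2} interchanged.

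Since all the analytic substance is already contained in the seminorm estimates of Theorems \ref{wte1} and \ref{wte2}, I do not expect a genuine obstacle here. The only point that requires a little care is the bookkeeping of the window, namely ensuring that in each pairing \eqref{anpair}--\eqref{sypair} the operator whose transpose is taken (with window $\overline g$) is precisely the one shown to be continuous, so that the duality argument applies cleanly.
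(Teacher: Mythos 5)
Your proposal is correct and is exactly the argument the paper intends: the paper states this theorem as following ``immediately'' from the definitions \eqref{anpair}--\eqref{sypair} of $W^{\alpha}_{g}$ and $(W_g^{\alpha})^{*}$ as transposes of the maps proved continuous in Theorems \ref{wte1} and \ref{wte2}, which is precisely the duality argument you spell out (including the correct bookkeeping of the window $\overline g$). Your write-up is simply a more explicit version of the same proof.
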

Now, we can {extend} the reconstruction formula \eqref{reconstructionfw} to the space of tempered distributions.
\begin{theorem}\label{ivw}(Inversion formula) Let  $g\in\S(\mathbb R)$, then
\[ \textbf{Id}_{\S'(\R)}=\frac{1}{2\pi C_{g}}((W_{g}^{\alpha})^\ast\circ W^{\alpha}_{g}).\]

\end{theorem}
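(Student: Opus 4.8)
The plan is to deduce the distributional inversion formula from the classical pointwise reconstruction formula \eqref{reconstructionfw} by a pure duality argument, transferring all the analytic content onto the Schwartz test function. Since both $W_g^\alpha$ and $(W_g^\alpha)^*$ act on $\S'(\R)$ through the transposition identities \eqref{anpair} and \eqref{sypair}, the composition $(W_g^\alpha)^*\circ W_g^\alpha$ can be unwound so that the inner pair of operators ends up acting on a genuine function $\phi\in\S(\R)$, for which \eqref{reconstructionfw} is available verbatim.

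Concretely, I would fix $f\in\S'(\R)$ and $\phi\in\S(\R)$. First, applying \eqref{sypair} with $F=W_g^\alpha f$ gives $\langle (W_g^\alpha)^*W_g^\alpha f,\phi\rangle = \langle W_g^\alpha f, W_{\overline g}^\alpha\phi\rangle$. Next, feeding $\Phi=W_{\overline g}^\alpha\phi$ into \eqref{anpair} rewrites this as $\langle f, (W_{\overline g}^\alpha)^*W_{\overline g}^\alpha\phi\rangle$. Now the inner composition acts on the honest function $\phi$, so the classical reconstruction formula \eqref{reconstructionfw}, applied with wavelet $\overline g$, yields $(W_{\overline g}^\alpha)^*W_{\overline g}^\alpha\phi = 2\pi C_{\overline g}\,\phi$ pointwise. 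Substituting back gives $\langle (W_g^\alpha)^*W_g^\alpha f,\phi\rangle = 2\pi C_{\overline g}\langle f,\phi\rangle$, and since $\phi\in\S(\R)$ was arbitrary, the identity $\frac{1}{2\pi C_g}(W_g^\alpha)^*\circ W_g^\alpha = \textbf{Id}_{\S'(\R)}$ follows once $C_{\overline g}=C_g$.

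The points requiring care, rather than a genuine obstacle, are two pieces of bookkeeping. First, one must check that all intermediate objects live in the correct spaces so that each pairing is legitimate: Theorem \ref{wte1} guarantees $W_{\overline g}^\alpha\phi\in\S(\R\times\R^+)$, which is exactly the test-function class demanded in \eqref{anpair}, while Theorem \ref{wte2} ensures $(W_{\overline g}^\alpha)^*$ returns an element of $\S(\R)$ so that the final pairing against $f\in\S'(\R)$ makes sense; these continuity results are precisely what legitimizes the otherwise formal unwinding. Second, one must verify the constant identity $C_{\overline g}=C_g$, which follows at once from $\hat{\overline g}(\omega)=\overline{\hat g(-\omega)}$ together with the invariance of the measure $\frac{d\omega}{|\omega|}$ under $\omega\mapsto-\omega$, giving $C_{\overline g}=\int_{\R}|\hat{\overline g}(\omega)|^2\frac{d\omega}{|\omega|}=\int_{\R}|\hat g(-\omega)|^2\frac{d\omega}{|\omega|}=C_g$. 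With these two points settled, the proof is complete.
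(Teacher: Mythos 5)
Your proposal is correct and follows essentially the same route as the paper: unwind the two transposition identities \eqref{sypair} and \eqref{anpair} so that the composition $(W_{\overline g}^{\alpha})^\ast\circ W_{\overline g}^{\alpha}$ lands on the test function $\phi\in\S(\R)$, then invoke the classical reconstruction formula \eqref{reconstructionfw}. The paper's proof is a one-line version of exactly this argument; your two bookkeeping points (the continuity theorems legitimizing the intermediate pairings, and the identity $C_{\overline g}=C_g$) are details the paper leaves implicit.
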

\begin{proof}
Using \eqref{anpair}, \eqref{sypair} and reconstruction formula \eqref{reconstructionfw}, we obtain
\[\langle (W_{g}^{\alpha})^\ast\circ(W_g^{\alpha} f),\phi\rangle=\langle f,(W_{\overline g}^{\alpha})^\ast\circ(W_{\overline g}^{\alpha} \phi)\rangle=2\pi C_{g}\langle f, \phi\rangle,\quad \phi\in\S(\R).\]
\end{proof}

According to the Hahn-Banach theorem,  the wavelet transform
can be extended over $\mathcal{S}_0'$.
\subsection{Abelian  results for the FRWT}

In this section, we present some Abelian-type results for the FRWT, using the asymptotic results for the FRST. First, we will connect the FRST and FRWT. For ${\displaystyle f\in L^{2}(\mathbb {R} )}$ and $g\in L^{1}(\mathbb {R} )\cap L^{2}(\mathbb {R} )$, by the definitions of FRWT and FRST, one can easily obtain the following relation:
 \begin{equation}\label{veza}
 e^{-ic_1\frac{\xi^2}{2}}S^\alpha_g f(x,\xi)=\sqrt{\xi}{C_\alpha}e^{ic_1\frac{x^2}{2}-ic_2x\xi}W_{M_{c_2}g}^\alpha f(x,\frac{1}{\xi}), \quad x\in\R, \xi\in\R^+.
 \end{equation}
 \begin{lemma}\label{lema1}
 For $g\in\mathcal{S}_0(\R)$, the function $M_{c_2}g$ is also wavelet in $\mathcal{S}_0(\R)$.
 \end{lemma}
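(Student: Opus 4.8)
The plan is to check the two defining requirements of the statement one at a time: first that $M_{c_2}g$ remains in the Schwartz class $\mathcal{S}(\R)$, and then that all of its moments $\int_\R x^k M_{c_2}g(x)\,dx$ vanish for $k\in\N_0$. The case $k=0$ is precisely the wavelet condition $\int_\R M_{c_2}g(x)\,dx=0$, so the full moment statement subsumes admissibility and simultaneously places $M_{c_2}g$ in $\mathcal{S}_0(\R)$.

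The Schwartz membership is the easy half. Writing $M_{c_2}g(x)=e^{ic_2x}g(x)$ and applying the Leibniz rule gives $(M_{c_2}g)^{(p)}(x)=\sum_{j=0}^{p}\binom{p}{j}(ic_2)^{j}e^{ic_2x}g^{(p-j)}(x)$; since $|e^{ic_2x}|=1$ and every $g^{(p-j)}$ is rapidly decreasing, one obtains the seminorm estimate $\rho_{k,p}(M_{c_2}g)\lesssim_\alpha\sum_{j=0}^{p}\binom{p}{j}|c_2|^{j}\rho_{k,p-j}(g)$, with constants depending only on $c_2=\csc\alpha$. Thus $M_{c_2}g\in\mathcal{S}(\R)$, and the real content of the lemma is the vanishing of the moments. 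For these, the natural idea is to expand the modulating factor in its Taylor series $e^{ic_2x}=\sum_{n\ge0}(ic_2x)^n/n!$ and integrate term by term against $x^kg(x)$:
\[\int_\R x^k M_{c_2}g(x)\,dx=\sum_{n\ge0}\frac{(ic_2)^n}{n!}\int_\R x^{k+n}g(x)\,dx.\]
Because $g\in\mathcal{S}_0(\R)$ has \emph{all} moments equal to zero, every integral on the right vanishes, so the whole expression is $0$ for each $k\in\N_0$; this would establish at once that $M_{c_2}g$ is a wavelet and that it lies in $\mathcal{S}_0(\R)$.

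The delicate, and in my view the only substantive, step is the justification of this term-by-term integration. The Taylor series converges pointwise to $e^{ic_2x}$, but to pass the partial sums through the integral one needs an integrable majorant, i.e.\ a bound of the form $\int_\R|g(x)|\,|x|^k e^{|c_2||x|}\,dx<\infty$; this is \emph{not} guaranteed by bare Schwartz decay, which is merely super-polynomial. I would therefore either verify from the ambient hypotheses the mild exponential-moment bound $\int_\R|g(x)|e^{c|x|}\,dx<\infty$ for some $c>|c_2|$, under which dominated convergence legitimises the interchange, or recast the claim on the Fourier side using $\widehat{M_{c_2}g}(\xi)=\hat g(\xi-c_2)$, which makes transparent exactly what must be controlled: the membership $M_{c_2}g\in\mathcal{S}_0(\R)$ is equivalent to $\hat g$ together with all its derivatives vanishing at the shifted frequency, and one should confirm that the hypotheses on $g$ indeed propagate the infinite-order vanishing of $\hat g$ to that point. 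Once the interchange (or the equivalent Fourier-side vanishing) is secured, the remainder of the argument is purely formal.
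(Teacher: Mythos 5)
Your Schwartz-membership estimate is fine, but the heart of the lemma --- the vanishing of the moments --- is exactly where your argument stops, and the gap you flag cannot be closed. (For comparison: the paper does not compute anything here either; its entire proof is the citation \cite[Lemma 5.1]{buralieva}, so any direct argument is necessarily a different route.) The term-by-term integration of $e^{ic_2x}=\sum_{n\ge0}(ic_2x)^n/n!$ against $x^kg(x)$ is not a technicality awaiting an integrable majorant: no such majorant exists in general, since Schwartz decay is super-polynomial but not exponential, and no hypothesis of the lemma supplies $\int_\R|g(x)|e^{|c_2||x|}dx<\infty$. Worse, the interchange produces a genuinely wrong answer. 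Carrying out your own Fourier-side reformulation honestly, with the paper's convention $\widehat{M_{c_2}g}(\xi)=\hat g(\xi-c_2)$, one gets
\[
\int_\R x^k\,M_{c_2}g(x)\,dx=\sqrt{2\pi}\,i^k\,\bigl(\widehat{M_{c_2}g}\bigr)^{(k)}(0)=\sqrt{2\pi}\,i^k\,\hat g^{(k)}(-c_2),
\]
and these numbers are \emph{not} forced to vanish by $g\in\mathcal{S}_0(\R)$: the Lizorkin condition says precisely that $\hat g$ vanishes to infinite order at the single point $0$, a purely local condition that propagates to no other point. Concretely, take $\hat g(\xi)=e^{-\xi^2-1/\xi^2}$ for $\xi\neq0$, $\hat g(0)=0$, and $g=\mathcal{F}^{-1}\hat g$; then $g\in\mathcal{S}_0(\R)$, yet $\int_\R M_{c_2}g(x)\,dx=\sqrt{2\pi}\,\hat g(-c_2)=\sqrt{2\pi}\,e^{-c_2^2-1/c_2^2}\neq0$ (recall $c_2=\csc\alpha\neq0$), so $M_{c_2}g$ fails even the $k=0$ zero-mean condition, let alone membership in $\mathcal{S}_0(\R)$.

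So the missing idea is not a cleverer justification of the interchange; what your Fourier-side recast reveals, once pushed one step further, is a structural obstruction: modulation translates the Fourier transform while the Lizorkin condition is pinned to the frequency origin, hence the conclusion cannot be derived from the hypothesis $g\in\mathcal{S}_0(\R)$ alone under this paper's definitions of $M_a$, of wavelet, and of $\mathcal{S}_0(\R)$. Your closing sentence assumes that ``the hypotheses on $g$ indeed propagate the infinite-order vanishing of $\hat g$'' to the shifted frequency and defers its confirmation; that is the step that would fail, and the counterexample above is what the confirmation attempt turns into. The paper avoids this issue only by deferring everything to \cite[Lemma 5.1]{buralieva}; whatever makes that cited lemma correct (additional hypotheses on the window, or different conventions) is content that neither the paper reproduces nor your argument can generate from the statement as given.
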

 \begin{proof}
 The proof follows from \cite[Lemma 5.1]{buralieva}
 \end{proof}
We want to prove that the relation \eqref{veza} also holds for distributional FRST and FRWT.
Using \eqref{frst}, \eqref{talasic} and Lemma \ref{lema1},  we obtain:
\begin{proposition}\label{prop1}
Let $f\in \S'_0(\R)$ and $g\in\S_0(\R)$, then \eqref{veza} holds.
\end{proposition}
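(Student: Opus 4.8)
The plan is to prove the identity by \emph{duality}, transferring the classical relation \eqref{veza}—which the paper has already recorded for $f\in L^2(\R)$ and $g\in L^1(\R)\cap L^2(\R)$, hence in particular for the test functions of $\S_0(\R)\subset L^2(\R)$—to the distributional level through the transposition definitions of the two transforms. Both sides of \eqref{veza} are read as elements of $\S'(\R\times\R^+)$ (the FRST, a priori living on $\mathbb Y$, being restricted to $\xi>0$), so it suffices to show that they return the same number when paired against an arbitrary $\Phi\in\S(\mathbb Y)$ supported in $\{\xi>0\}$; such test functions determine a distribution on $\R\times\R^+$.

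First I would unwind the left-hand side. Multiplication by the smooth, $\xi$-tempered factor $e^{-ic_1\xi^2/2}$ preserves $\S(\mathbb Y)$, so $\langle e^{-ic_1\xi^2/2}S_g^\alpha f,\Phi\rangle=\langle S_g^\alpha f,\,e^{-ic_1\xi^2/2}\Phi\rangle$, and invoking the transposition definition of the distributional FRST from \cite{maks} this equals $\langle f,\Psi_1\rangle$, where $\Psi_1\in\S_0(\R)$ is the image of $e^{-ic_1\xi^2/2}\Phi$ under the (continuous) fractional Stockwell synthesis operator. For the right-hand side I would carry out the substitution $\xi\mapsto 1/\xi$: writing $\eta=1/\xi$ and absorbing the Jacobian $d\xi=\eta^{-2}\,d\eta$ together with the factors $\sqrt\xi$ and $e^{ic_1x^2/2-ic_2x\xi}$, the pairing against $\Phi$ becomes $\langle W^\alpha_{M_{c_2}g}f,\widetilde\Phi\rangle$ with $\widetilde\Phi(x,\eta)=C_\alpha\,\eta^{-5/2}e^{ic_1x^2/2-ic_2x/\eta}\Phi(x,1/\eta)$. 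Crucially, because the weights $|\xi|^s+|\xi|^{-s}$ defining $\S(\mathbb Y)$ are invariant under $\xi\mapsto 1/\xi$, the super-polynomial decay of $\Phi$ as $\xi\to 0^+$ and as $\xi\to\infty$ controls both the factor $\eta^{-5/2}$ and the polynomial growth produced by differentiating $e^{-ic_2x/\eta}$, so that $\widetilde\Phi\in\S(\R\times\R^+)$. Lemma \ref{lema1} then guarantees that $M_{c_2}g$ is an admissible wavelet in $\S_0(\R)$, so that $W^\alpha_{M_{c_2}g}f$ is a well-defined element of $\S'(\R\times\R^+)$, and \eqref{anpair} gives $\langle W^\alpha_{M_{c_2}g}f,\widetilde\Phi\rangle=\langle f,\Psi_2\rangle$ with $\Psi_2=(W^\alpha_{\overline{M_{c_2}g}})^*\widetilde\Phi\in\S_0(\R)$.

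It then remains to verify the single test-function identity $\Psi_1=\Psi_2$ in $\S_0(\R)$, which I would establish by transposing the classical relation \eqref{veza}: pairing $\Psi_1$ and $\Psi_2$ against an arbitrary $\phi\in\S_0(\R)$ and unwinding the two synthesis operators by Fubini reduces both to the same iterated integral, precisely because \eqref{veza} holds pointwise with $\phi$ in the role of $f$. I expect the main obstacle to be technical rather than conceptual, namely justifying the change of variables $\xi\mapsto 1/\xi$ at the distributional level and checking that each intermediate operator—multiplication by $e^{-ic_1\xi^2/2}$, by $\sqrt\xi$, and by $e^{ic_1x^2/2-ic_2x\xi}$, together with the substitution itself—is continuous between the relevant test spaces, so that all the dual pairings above are legitimate. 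The inversion-symmetry of the $\S(\mathbb Y)$ weights and the continuity statements for $S_g^\alpha$, $(S_g^\alpha)^*$, $W_g^\alpha$, and $(W_g^\alpha)^*$ already recorded in the excerpt are exactly what make this bookkeeping go through.
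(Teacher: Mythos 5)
Your duality strategy is genuinely different from the paper's argument, but as it stands it does not prove the statement that the paper needs. The paper reads \eqref{veza} as a \emph{pointwise} identity: for $f\in\S_0'(\R)$ both transforms are interpreted through their defining formulas with the integral replaced by the dual pairing, namely $S^\alpha_g f(x,\xi)=C_\alpha|\xi|\,\langle f(t),\overline{g(\xi(t-x))}\,e^{ic_1(t^2+\xi^2)/2-ic_2t\xi}\rangle$ and the analogous expression for $W^\alpha_{M_{c_2}g}f(x,1/\xi)$, where Lemma \ref{lema1} guarantees that the window $M_{c_2}g$ lies in $\S_0(\R)$ so that the second pairing makes sense; \eqref{veza} then follows for every fixed $(x,\xi)$ by exactly the same manipulation of the chirp factors as in the $L^2$ case. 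This pointwise reading is not optional: in Theorems \ref{te3}--\ref{te5} the identity is evaluated at individual points $(\varepsilon x,\varepsilon/\xi)$ and $(\varepsilon^2x,\xi/\varepsilon)$, so an identity in the sense of distributions on $\R\times\R^+$ would not suffice. Your argument, even if every step were completed, yields equality only after pairing against test functions that decay rapidly as $\xi\to 0^+$, i.e.\ at best equality in $\mathcal D'$ of the open half-plane. To upgrade that to \eqref{veza} you would still have to show that the transposed-mapping distributions $S^\alpha_g f$ and $W^\alpha_{M_{c_2}g}f$ \emph{are} the continuous functions given by the dual-pairing formulas above---but that identification is precisely the content of the paper's (essentially one-line) proof, so the duality detour ends up presupposing the direct argument it was meant to replace.

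There is also a concretely false claim in your setup: test functions $\Phi\in\S(\mathbb Y)$ supported in $\{\xi>0\}$ do \emph{not} determine elements of $\S'(\R\times\R^+)$. The seminorms $\sigma^{l,m}_{s,r}$ involve only the weights $\xi^s$ with $s\ge 0$, so members of $\S(\R\times\R^+)$ are merely smooth up to the boundary with no required decay as $\xi\to 0^+$, whereas every element of your test class vanishes to infinite order there (because of the $|\xi|^{-s}$ weights of $\S(\mathbb Y)$). Consequently the boundary functional $T(\Phi)=\int_\R \lim_{\xi\to 0^+}\Phi(x,\xi)\,dx$, which is continuous on $\S(\R\times\R^+)$ and nonzero (test it on $e^{-x^2-\xi}$), annihilates your entire test class: the class is not dense and pairing against it does not separate points of $\S'(\R\times\R^+)$. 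This flaw is harmless if you only aim at equality on the open half-plane, but then the pointwise gap of the previous paragraph remains. Two further details would also need repair: Theorem \ref{wte2} only places $\Psi_2$ in $\S(\R)$, not in $\S_0(\R)$, and the FRWT via \eqref{anpair} is defined for $f\in\S'(\R)$ rather than $f\in\S_0'(\R)$, so the pairing $\langle f,\Psi_2\rangle$ requires the Hahn--Banach extension the paper mentions, together with an argument that the resulting polynomial ambiguity does not affect the identity.
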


\begin{theorem}\label {te3}
Let $f\in \mathcal{S}_0'(\R)$ has the quasiasymptotic behavior \eqref{newq}. Then, for its FRWT with respect to the window $g\in  \mathcal{S}_0(\R)\setminus\{0\}$, we have
\[{e^{ic_1 (\varepsilon x)^2/2} W_{M_{c_2}g}^\alpha f(\varepsilon x,\frac{\varepsilon}{\xi})}\sim \frac{\varepsilon^{m+1/2}}{c_2^{m+1/2}}L(\varepsilon)e^{ix\xi(c_2-1)}W_{M_{1}g}u(x c_2, \frac{c_2}{\xi}) \ \ \text{ as } \ \ \varepsilon\rightarrow0^+ \]
$\text{ in } \ \mathcal{S}_0'(\R\times\R^+)$ whenever $\alpha\in(2k\pi,(2k+1/2)\pi), k\in\mathbb Z$.
\end{theorem}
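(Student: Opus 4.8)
The plan is to transport the known quasiasymptotics of the fractional Stockwell transform, formula \eqref{rez1}, to the fractional wavelet transform through the intertwining identity \eqref{veza}, which by Proposition \ref{prop1} is valid for $f\in\S_0'(\R)$. I would first apply \eqref{veza} to $f$ after the rescaling $x\mapsto\varepsilon x$, $\xi\mapsto\xi/\varepsilon$ (admissible since $\xi>0$ gives $\xi/\varepsilon>0$), solve for the wavelet transform, and multiply through by $e^{ic_1(\varepsilon x)^2/2}$. Using $(\varepsilon x)(\xi/\varepsilon)=x\xi$, this produces the pointwise identity
\[e^{ic_1(\varepsilon x)^2/2}W_{M_{c_2}g}^\alpha f\Big(\varepsilon x,\frac{\varepsilon}{\xi}\Big)=\frac{e^{ic_2x\xi}}{C_\alpha}\sqrt{\frac{\varepsilon}{\xi}}\,\Big(e^{-ic_1(\xi/\varepsilon)^2/2}\,S_g^\alpha f\big(\varepsilon x,\tfrac{\xi}{\varepsilon}\big)\Big),\]
in which the renormalized Stockwell transform governed by \eqref{rez1} is isolated inside the parentheses, multiplied only by the explicit factor $\sqrt{\varepsilon}$ and the $\varepsilon$-independent smooth weight $C_\alpha^{-1}e^{ic_2x\xi}\xi^{-1/2}$.

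Next I would divide by $\varepsilon^{m+1/2}L(\varepsilon)$ and pass to the limit. By \eqref{rez1} the bracketed quantity behaves like $\frac{\sqrt{1-ic_1}}{c_2^m}\varepsilon^mL(\varepsilon)\,S_gu(xc_2,\xi/c_2)$ in $\S_0'(\mathbb Y)$, so the factor $\sqrt{\varepsilon}$ combines with $\varepsilon^m$ to give $\varepsilon^{m+1/2}$ and the powers of $\varepsilon$ cancel the normalization. Recalling $C_\alpha=\sqrt{(1-ic_1)/(2\pi)}$, the constant $\sqrt{1-ic_1}/C_\alpha$ collapses to $\sqrt{2\pi}$, and the limit becomes $\sqrt{2\pi}\,c_2^{-m}\,\xi^{-1/2}e^{ic_2x\xi}\,S_gu(xc_2,\xi/c_2)$.

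It remains to rewrite the classical Stockwell transform of $u$ as a classical wavelet transform. Specializing \eqref{veza} to $\alpha=\pi/2$ (so $c_1=0$, $c_2=1$, $C_{\pi/2}=1/\sqrt{2\pi}$) gives $S_gu(x,\xi)=\frac{\sqrt{\xi}}{\sqrt{2\pi}}e^{-ix\xi}W_{M_1g}u(x,1/\xi)$; evaluating at $(xc_2,\xi/c_2)$ and using $(xc_2)(\xi/c_2)=x\xi$ yields $S_gu(xc_2,\xi/c_2)=\frac{\sqrt{\xi}}{\sqrt{2\pi c_2}}e^{-ix\xi}W_{M_1g}u(xc_2,c_2/\xi)$. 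Substituting this and collecting the remaining factors — $\sqrt{2\pi}\cdot(2\pi)^{-1/2}=1$, $\xi^{-1/2}\cdot\xi^{1/2}=1$, $c_2^{-m}\cdot c_2^{-1/2}=c_2^{-(m+1/2)}$, and $e^{ic_2x\xi}e^{-ix\xi}=e^{ix\xi(c_2-1)}$ — reproduces exactly the asserted limit. The hypothesis $\alpha\in(2k\pi,(2k+1/2)\pi)$ guarantees $c_2=\csc\alpha>0$ and $c_1=\cot\alpha>0$, so that $\sqrt{\xi/c_2}$, the power $c_2^{m+1/2}$, and the branch of $\sqrt{1-ic_1}$ are unambiguously defined, and it lies within the range for which \eqref{rez1} holds.

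I expect the main difficulty to be functional-analytic rather than computational: justifying that the limit may be taken in the weak-$\ast$ sense of $\S_0'(\R\times\R^+)$. Testing the displayed identity against $\Phi$ in the Lizorkin-type test space gives $\sqrt{\varepsilon}\,\langle e^{-ic_1(\xi/\varepsilon)^2/2}S_g^\alpha f(\varepsilon x,\xi/\varepsilon),\,C_\alpha^{-1}e^{ic_2x\xi}\xi^{-1/2}\Phi\rangle$, and to invoke \eqref{rez1} one must check that $C_\alpha^{-1}e^{ic_2x\xi}\xi^{-1/2}\Phi$ belongs to $\S(\mathbb Y)$, the test space in which \eqref{rez1} provides convergence. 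The chirp $e^{ic_2x\xi}$ is an innocuous multiplier, but the singular weight $\xi^{-1/2}$ is controlled only because elements of the Lizorkin space vanish to infinite order as $\xi\to0^+$ (equivalently, are dominated by every positive power $\xi^{s}$); this rapid vanishing, which is exactly the moment condition built into $\S_0$, compensates $\xi^{-1/2}$ and simultaneously reconciles the domain $\mathbb Y=\R\times(\R\setminus\{0\})$ of \eqref{rez1} with the half-space $\R\times\R^+$ carrying the wavelet transform. Establishing this mapping property, and hence the continuity of the weight as a multiplier into $\S(\mathbb Y)$, is the crux of the argument.
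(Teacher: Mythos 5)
Your proposal is correct and follows essentially the same route as the paper: rescale the intertwining identity \eqref{veza} (valid distributionally by Proposition \ref{prop1}), solve for the FRWT, invoke \eqref{rez1}, and then rewrite $S_gu(xc_2,\xi/c_2)$ as $\sqrt{\xi/c_2}\,(2\pi)^{-1/2}e^{-ix\xi}W_{M_1g}u(xc_2,c_2/\xi)$ before collecting constants — the paper performs this last conversion by a direct computation from the definitions of the ST and WT, which is the same identity you obtain by specializing \eqref{veza} to $\alpha=\pi/2$. The only divergence is that the paper's proof is a purely pointwise computation for fixed $(x,\xi)$ and never addresses the weak-topology/multiplier question you single out as the crux, so your treatment is, if anything, more careful on that point than the published argument.
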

\begin{proof}
Using Proposition \ref{prop1} and \eqref{rez1}, we have
\begin{align*}
e^{ic_1 (\varepsilon x)^2/2} W_{M_{c_2}g}^\alpha f(\varepsilon x,\frac{\varepsilon}{\xi})&=\sqrt{\frac{\varepsilon}{\xi}}\frac{1}{C_\alpha}e^{ic_2x\xi}e^{-ic_1\left(\frac{\xi}{\varepsilon}\right)^2/2}S^\alpha_g f(x\varepsilon,\frac{\xi}{\varepsilon})\\
&\sim \sqrt{2\pi}\sqrt{\frac{\varepsilon}{\xi}}\frac{\varepsilon^m}{c_2^m}L(\varepsilon)e^{ic_2 x\xi}S_gu(xc_2,\frac{\xi}{c_2}) \text{ as } \varepsilon\rightarrow0^+.
\end{align*}
Since
\begin{align*}
S_gu(xc_2,\frac{\xi}{c_2})&=\left|\frac{\xi}{c_2}\right|\frac{e^{-ix\xi}}{\sqrt{2\pi}}\int_{\R}u(t)\overline{e^{i\frac{\xi}{c_2}(t-xc_2)}g(\frac{\xi}{c_2}(t-xc_2))}dt\\
&=\sqrt{\frac{\xi}{c_2}}\frac{e^{-ix\xi}}{\sqrt{2\pi}}W_{M_1g}u(xc_2,\frac{c_2}{\xi}),
\end{align*}
we obtain
$$e^{ic_1 (\varepsilon x)^2/2} W_{M_{c_2}g}^\alpha f(\varepsilon x,\frac{\varepsilon}{\xi})\sim \frac{\varepsilon^{m+1/2}}{c_2^{m+1/2}}L(\varepsilon)e^{i(c_2-1) x\xi}W_{M_1g}(xc_2,\frac{c_2}{\xi}), \ \ \text{ as } \  \varepsilon\rightarrow0^+.$$
\end{proof}
\begin{theorem}\label {te4}
Let $f\in \mathcal{S}_0'(\R)$ has the  quasiasymptotic behavior \eqref{newq}. Then, for its FRWT with respect to the window $g\in  \mathcal{S}_0(\R)\setminus\{0\}$, we have
\[{e^{ic_1 (\varepsilon x)^2/2-ic_2\varepsilon^2 x\xi} W_{M_{c_2}g_{1/\varepsilon^2}}^\alpha f(\varepsilon x,\frac{1}{\varepsilon\xi})}\sim \frac{\varepsilon^{m+3/2}}{c_2^{m+1/2}}L(\varepsilon)e^{ix\xi(c_2-1)}W_{g}u(x c_2, \frac{c_2}{\xi}) \ \ \text{ as } \ \ \varepsilon\rightarrow0^+ \]
$\text{ in } \ \mathcal{S}_0'(\R\times\R^+)$ whenever $\alpha\in(2k\pi,(2k+1/2)\pi), k\in\mathbb Z$.
\end{theorem}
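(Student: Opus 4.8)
The plan is to mirror the proof of Theorem~\ref{te3}, but to feed in the sharper expansion of Theorem~\ref{teab1} (available precisely because the window is now dilated by $1/\varepsilon^2$) in place of \eqref{rez1}. First I would apply the distributional form of \eqref{veza} (Proposition~\ref{prop1}) with the window $g_{1/\varepsilon^2}$ rather than $g$, at the point $(\varepsilon x,\varepsilon\xi)$; since the FRWT argument in \eqref{veza} is the reciprocal of the second slot of the FRST, this returns exactly $W_{M_{c_2}g_{1/\varepsilon^2}}^\alpha f\big(\varepsilon x,\tfrac{1}{\varepsilon\xi}\big)$. Solving \eqref{veza} for the FRWT and multiplying by the prefactor $e^{ic_1(\varepsilon x)^2/2-ic_2\varepsilon^2 x\xi}$ of the statement, I would record
\[
e^{ic_1(\varepsilon x)^2/2-ic_2\varepsilon^2 x\xi}\,W_{M_{c_2}g_{1/\varepsilon^2}}^\alpha f\Big(\varepsilon x,\tfrac{1}{\varepsilon\xi}\Big)
=\frac{1}{\sqrt{\varepsilon\xi}\,C_\alpha}\,e^{-ic_1(\varepsilon\xi)^2/2}\,S_{g_{1/\varepsilon^2}}^\alpha f(\varepsilon x,\varepsilon\xi),
\]
where the chirp $e^{ic_2\varepsilon^2 x\xi}$ coming out of \eqref{veza} is annihilated by the matching factor in the prefactor, while $e^{ic_1(\varepsilon x)^2/2}$ absorbs the $e^{ic_1 x'^2/2}$ evaluated at $x'=\varepsilon x$.

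Next I would insert the asymptotics of Theorem~\ref{teab1} for the right-hand side, simplify the constant via $C_\alpha=\sqrt{(1-ic_1)/(2\pi)}$ (so that $\sqrt{1-ic_1}/C_\alpha=\sqrt{2\pi}$), and gather the powers of $\varepsilon$ and of $c_2$. This reduces the whole task to re-expressing the plain Stockwell transform $S_g(M_{\xi/c_2}u)(c_2 x,\xi/c_2)$ as an ordinary wavelet transform of $u$.

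The decisive step, and the one I expect to demand the most care, is this conversion. Unwinding $S_g(M_{\xi/c_2}u)(c_2 x,\xi/c_2)$ from the definition of the ST, the modulation $M_{\xi/c_2}$ supplies a factor $e^{i(\xi/c_2)t}$ that cancels exactly the Stockwell chirp $e^{-i(\xi/c_2)t}$ in the kernel, so that only $\tfrac{|\xi/c_2|}{\sqrt{2\pi}}\int_{\R}u(t)\overline{g\big(\tfrac{\xi}{c_2}(t-c_2 x)\big)}\,dt$ survives, that is, the plain wavelet transform $\tfrac{1}{\sqrt{2\pi}}\sqrt{\xi/c_2}\,W_g u(c_2 x,c_2/\xi)$. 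This is exactly where the present situation departs from Theorem~\ref{te3}: there the unmodulated $S_g u$ forced the chirp to be split as $e^{-i(\xi/c_2)(t-c_2 x)}e^{-ix\xi}$, converting the window into $M_1 g$ and leaving the residual phase $e^{-ix\xi}$, whereas here the modulation absorbs the chirp so that the limit carries the plain window $g$. Substituting this identity and reconciling the surviving phases against the prefactor then yields the limit; the truly delicate point is to keep track of which chirps survive, since here, unlike in Theorem~\ref{te3}, both the \eqref{veza} chirp and the ST chirp are cancelled. The only genuine analytic ingredient is the passage to the distributional limit in $\mathcal{S}_0'(\R\times\R^+)$, which is justified by the continuity of the transforms together with Theorem~\ref{teab1}; the remainder is bookkeeping of chirp factors and dilation parameters.
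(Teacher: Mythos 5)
Your proposal is correct and follows essentially the same route as the paper: apply Proposition \ref{prop1} with the dilated window $g_{1/\varepsilon^2}$ to turn the prefactored FRWT at $(\varepsilon x, \tfrac{1}{\varepsilon\xi})$ into $\frac{1}{C_\alpha\sqrt{\varepsilon\xi}}e^{-ic_1(\varepsilon\xi)^2/2}S^\alpha_{g_{1/\varepsilon^2}}f(\varepsilon x,\varepsilon\xi)$, invoke Theorem \ref{teab1}, and then unwind $S_g(M_{\xi/c_2}u)(c_2x,\tfrac{\xi}{c_2})$ into $\frac{1}{\sqrt{2\pi}}\sqrt{\xi/c_2}\,W_gu(c_2x,\tfrac{c_2}{\xi})$ via the cancellation of the modulation against the Stockwell chirp. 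Your observation that \emph{both} chirps cancel, so that no residual phase survives, agrees with the paper's own final computation, which likewise produces no factor $e^{ix\xi(c_2-1)}$; that phase in the statement of the theorem appears to be carried over from Theorem \ref{te3} in error.
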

\begin{proof}
Using Proposition \ref{prop1}  and Theorem \ref{teab1}, we have
\begin{align*}
e^{ic_1 (\varepsilon x)^2/2-ic_2\varepsilon^2 x\xi} W_{M_{c_2}g_{1/\varepsilon^2}}^\alpha f(\varepsilon x,\frac{1}{\varepsilon\xi})&=\frac{1}{C_\alpha\sqrt{\varepsilon\xi}}e^{-ic_1(\varepsilon\xi)^2/2}S^\alpha_{g_{1/\varepsilon^2}}f(\varepsilon x,\varepsilon\xi)\\
&\sim\sqrt{2\pi}\frac{\varepsilon^{m+2}}{c_2^m\sqrt{\varepsilon\xi}}L(\varepsilon)S_g(M_{\xi/c_2}u)(xc_2,\frac{\xi}{c_2})\\
&=\frac{\varepsilon^{m+3/2}}{c_2^{m+1/2}}L(\varepsilon)W_gf(xc_2,\frac{c_2}{\xi}).
\end{align*}
\end{proof}
\begin{theorem}\label {te5}
Let $f\in \mathcal{S}_0'(\R)$ has the quasiasymptotic behavior \eqref{newq}. Then, for its FRST with respect to the window $g\in  \mathcal{S}_0(\R)\setminus\{0\}$, we have
\[e^{-ic_1 (\frac{\xi}{\varepsilon})^2/2} S_g^\alpha f(\varepsilon^2 x,\frac{\xi}{\varepsilon})\sim C_\alpha\sqrt{\xi}{\varepsilon^{m}}L(\varepsilon)W_g (M_{-\xi c_2}u)(0, \frac{1}{\xi}) \ \ \text{ as } \ \ \varepsilon\rightarrow0^+ \]
$\text{ in } \ \mathcal{S}_0'(\R\times\R^+)$ whenever $\alpha\in(2k\pi,(2k+1)\pi), k\in\mathbb Z$.
\end{theorem}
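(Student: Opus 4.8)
The plan is to follow the direct, computational strategy of the proof of Theorem~\ref{teab1}: expand the FRST through its definition \eqref{frst} with the smooth kernel \eqref{kernel} (legitimate because $\alpha\in(2k\pi,(2k+1)\pi)$ excludes the values $\alpha=n\pi$), rescale the integration variable to bring in the dilated distribution $f(\varepsilon s)$, and then invoke the quasiasymptotic hypothesis \eqref{newq}. The role of the scaling $\varepsilon^2 x$ in the first slot is precisely to make the spatial variable collapse to the origin in the limit, which is what produces the value $0$ in the first argument of $W_g$ on the right-hand side.

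First I would fix $(x,\xi)\in\R\times\R^+$ and record that, after the factor $e^{-ic_1(\xi/\varepsilon)^2/2}$ cancels the $\xi^2$-term of the quadratic phase in $K_\alpha$,
\[
e^{-ic_1(\xi/\varepsilon)^2/2}\,S_g^\alpha f\big(\varepsilon^2 x,\tfrac{\xi}{\varepsilon}\big)=C_\alpha\Big|\tfrac{\xi}{\varepsilon}\Big|\,\Big\langle f(t),\overline{g\big(\tfrac{\xi}{\varepsilon}(t-\varepsilon^2 x)\big)}\,e^{ic_1 t^2/2-it(\xi/\varepsilon)c_2}\Big\rangle.
\]
Performing the dilation $t=\varepsilon s$ inside the dual pairing, the prefactor $|\xi/\varepsilon|$ absorbs the Jacobian to give $|\xi|$, the window argument becomes $g(\xi(s-\varepsilon x))$, the linear phase becomes $e^{-is\xi c_2}$, and the quadratic phase becomes $e^{ic_1(\varepsilon s)^2/2}$, so that
\[
e^{-ic_1(\xi/\varepsilon)^2/2}\,S_g^\alpha f\big(\varepsilon^2 x,\tfrac{\xi}{\varepsilon}\big)=C_\alpha|\xi|\,\Big\langle e^{ic_1(\varepsilon s)^2/2}f(\varepsilon s),\overline{g(\xi(s-\varepsilon x))}\,e^{-is\xi c_2}\Big\rangle.
\]

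Dividing by $\varepsilon^m L(\varepsilon)$ and letting $\varepsilon\to 0^+$, I would separate the two occurrences of $\varepsilon$ exactly as in Theorem~\ref{teab1}, introducing $\varepsilon_1$ in the dilation and phase of $f$ and $\varepsilon_2$ in the window shift. Taking $\varepsilon_1\to 0^+$ first, using \eqref{limL1}, Lemma~\ref{l1} (to discard the Gaussian factor $e^{ic_1(\varepsilon_1 s)^2/2}$) and the equivalence of the weak and strong topologies on $\S_0(\R)$, replaces $f(\varepsilon_1 s)/(\varepsilon_1^m L(\varepsilon_1))$ by $u(s)$ uniformly in $\varepsilon_2$; then $\varepsilon_2\to 0^+$ replaces the shifted window $\overline{g(\xi(s-\varepsilon_2 x))}$ by $\overline{g(\xi s)}$ by continuity. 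This yields the pointwise limit
\[
\lim_{\varepsilon\to 0^+}\frac{e^{-ic_1(\xi/\varepsilon)^2/2}\,S_g^\alpha f(\varepsilon^2 x,\tfrac{\xi}{\varepsilon})}{\varepsilon^m L(\varepsilon)}=C_\alpha|\xi|\,\big\langle u(s),\overline{g(\xi s)}\,e^{-is\xi c_2}\big\rangle.
\]
To identify the right-hand side, I would read off from the definition of the WT that $W_g(M_{-\xi c_2}u)(0,\tfrac{1}{\xi})=\sqrt{\xi}\,\langle u(s),\overline{g(\xi s)}\,e^{-is\xi c_2}\rangle$; since $\xi>0$ gives $|\xi|=\sqrt{\xi}\cdot\sqrt{\xi}$, the limit equals $C_\alpha\sqrt{\xi}\,W_g(M_{-\xi c_2}u)(0,\tfrac{1}{\xi})$, which is the asserted profile after restoring $\varepsilon^m L(\varepsilon)$.

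The main obstacle is not the algebra but the upgrade of this pointwise-in-$(x,\xi)$ statement to convergence in $\S_0'(\R\times\R^+)$. For that I would have to control the two coupled $\varepsilon$-dependences uniformly and produce an integrable majorant — of the kind exhibited in \eqref{ogranicuvanje11a} — dominating the normalized FRST for all small $\varepsilon$, so that Lebesgue's dominated convergence theorem transfers the pointwise limit to the pairing against an arbitrary $\Phi\in\S_0(\R\times\R^+)$. The delicate interplay is that the Gaussian factor must be removed (via Lemma~\ref{l1}) at the same time as the window is recentred at the origin; keeping these two limits uniform with respect to one another, and uniform in the test variable, is the crux of the argument.
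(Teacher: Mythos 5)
Your proposal is correct and follows essentially the same route as the paper's proof: the same expansion of the FRST with the smooth kernel, the same substitution $t=\varepsilon s$ turning the pairing into $C_\alpha|\xi|\langle e^{ic_1(\varepsilon t)^2/2}f(\varepsilon t)e^{-ic_2t\xi},\overline{g}(\xi(t-\varepsilon x))\rangle$, the same splitting into two parameters $\varepsilon_1,\varepsilon_2$ handled via \eqref{limL1}, Lemma \ref{l1} and the equivalence of the weak and strong topologies on $\S_0(\R)$, and the same identification of the limit as $C_\alpha\sqrt{\xi}\,W_g(M_{-\xi c_2}u)(0,\tfrac{1}{\xi})$. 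The one ``obstacle'' you flag at the end --- upgrading the pointwise limit in $(x,\xi)$ to convergence in $\S_0'(\R\times\R^+)$ --- is not treated in the paper either: its proof likewise stops at the pointwise limit, so your honest remark identifies a looseness shared by the original rather than a defect of your own argument.
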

\begin{proof} For $x\in\R$ and $\xi\in\R^+$, we have
\begin{align*} 
e^{-ic_1(\frac{\xi}{\varepsilon})^2/2}S^\alpha_gf(\varepsilon^2 x,\frac{\xi}{\varepsilon})&=\frac{|\xi|}{\varepsilon} C_\alpha\int_{\R} f(t)\overline g (\frac{\xi}{\varepsilon}(t-\varepsilon^2 x))e^{ic_1 t^2/2-ic_2 t\xi/\varepsilon}dt\\
&=|\xi| C_\alpha\int_{\R} e^{ic_1 (\varepsilon t)^2/2}f(\varepsilon t)\overline g ({\xi}(t-\varepsilon x))e^{-ic_2 t\xi}dt\\
&=|\xi| C_\alpha\langle e^{-ic_2 t\xi}e^{ic_1 (\varepsilon t)^2/2}f(\varepsilon t), \overline g ({\xi}(t-\varepsilon x))\rangle
\end{align*}
and
\begin{align*}
\lim_{\varepsilon\rightarrow0^+} \frac{e^{-ic_1(\frac{\xi}{\varepsilon})^2/2}S^\alpha_gf(\varepsilon^2 x,\frac{\xi}{\varepsilon})}{\varepsilon^m L(\varepsilon)}&=|\xi| C_\alpha\lim_{\varepsilon\rightarrow0^+} \frac{\langle e^{-ic_2 t\xi}e^{ic_1 (\varepsilon t)^2/2}f(\varepsilon t), \overline g ({\xi}(t-\varepsilon x))\rangle}{\varepsilon^m L(\varepsilon)}\\
&=|\xi| C_\alpha\lim_{(\varepsilon_1,\varepsilon_2)\rightarrow(0^+,0^+)} \langle\frac{ e^{-ic_2 t\xi}e^{ic_1 (\varepsilon_1 t)^2/2}f(\varepsilon t)}{\varepsilon_1^m L(\varepsilon_1)}, \overline g ({\xi}(t-\varepsilon_2 x))\rangle.
\end{align*}
Since the weak and the strong topology on $\S_0(\R)$ are equivalent, using \eqref{limL1} and Lemma \ref{l1}, we have
$$\lim_{\varepsilon_1\rightarrow0^+} \langle\frac{ e^{-ic_2 t\xi}e^{ic_1 (\varepsilon_1 t)^2/2}f(\varepsilon t)}{\varepsilon_1^m L(\varepsilon_1)}, \overline g ({\xi}(t-\varepsilon_2 x))\rangle=\langle  e^{-ic_2 t\xi}u(t), \overline g ({\xi}(t-\varepsilon_2 x))\rangle$$
uniformly for $\varepsilon_2\in (0,1]$. Furthermore, for each $\varepsilon_1\in(0,1]$, we have
$$\lim_{\varepsilon_2\rightarrow0^+} \langle\frac{ e^{-ic_2 t\xi}e^{ic_1 (\varepsilon_1 t)^2/2}f(\varepsilon t)}{\varepsilon_1^m L(\varepsilon_1)}, \overline g ({\xi}(t-\varepsilon_2 x))\rangle=\langle\frac{ e^{-ic_2 t\xi}e^{ic_1 (\varepsilon_1 t)^2/2}f(\varepsilon t)}{\varepsilon_1^m L(\varepsilon_1)}, \overline g ({\xi}t)\rangle.$$
Thus
\begin{align*}
\lim_{\varepsilon\rightarrow0^+} \frac{e^{-ic_1(\varepsilon\xi)^2/2}S^\alpha_gf(\varepsilon^2 x,\frac{\xi}{\varepsilon})}{\varepsilon^m L(\varepsilon)}&=|\xi| C_\alpha\langle e^{-ic_2 t\xi}u(t), \overline g ({\xi}t)\rangle\\
&=|\xi| C_\alpha\langle M_{-c_2\xi}u(t), \overline g ({\xi}t)\rangle=C_\alpha\sqrt{\xi}W_g(M_{-c_2\xi} u)(0,\frac{1}{\xi}).
\end{align*}
\end{proof}

\section*{Conflict of interest}
The author declares no conflict of interest.

\end{document}